\numberwithin{equation}{section}
 \DeclareMathOperator{\Dom}{Dom}
 \DeclareMathOperator{\supp}{supp}
\newtheorem{thm}{Theorem}[section]
\newtheorem{lemma}[thm]{Lemma}
\newtheorem{prop}[thm]{Proposition}
\newtheorem{cor}[thm]{Corollary}
\theoremstyle{definition}
\newtheorem{rem}[thm]{Remark}
\theoremstyle{definition}
\newcommand{\be}{\begin{eqnarray}}
\newcommand{\ee}{\end{eqnarray}}
\newcommand{\comment}[1]{}
\begin{document}

\title{A Thom-Smale-Witten theorem\\
on manifolds with boundary}
\author{Wen  Lu}

\address{Mathematisches Institut,
Universit\"at zu K\"oln, Weyertal 86-90, 50931 K\"oln, Germany}
\email{wlu@math.uni-koeln.de}

\date{\today}

\maketitle

\begin{abstract}
Given a smooth compact manifold with boundary,
we show that the subcomplex
of the deformed de Rham complex consisting of eigenspaces of small eigenvalues of the Witten
Laplacian is canonically isomorphic to the Thom-Smale complex constructed by Laudenbach
in \cite{Laudenbach10}. Our proof is based on
Bismut-Lebeau's analytic localization techniques.
As a by-product, we obtain Morse inequalities for manifolds
with boundary.
\end{abstract}

\section{Introduction}
\label{s0}

\noindent Morse theory is a method to determine the topology of a finite or infinite dimensional manifold
(such as the space of paths or loops on a compact manifold) from the critical points of
one suitable function on the manifold. The theory has many far-reaching applications
ranging from existence of exotic spheres to supersymmetry and Yang-Mills theory.

In this paper we will study Morse theory on a manifold with boundary.
The main goal is to exhibit a canonical isomorphism
between the Witten instanton complex constrained to boundary conditions
and the Thom-Smale complex.

Recall that a Morse function on a manifold without boundary is a
smooth real function whose critical points are all non-degenerate.
Let $M$ be a smooth $n$-dimensional closed manifold.
Let $f$ be a Morse function on $M$ and choose a Riemannian metric on $M$ such that
the gradient vector field $\nabla f$ satisfies the Morse-Smale transversality conditions.
This dynamical system gives rise to a chain complex, called Thom-Smale complex,
having the critical points as generators and boundary map expressed in terms of the unstable
and stable manifolds.
The Morse homology theorem asserts that the Thom-Smale complex recovers the standard homology of
the underlying manifold (see \cite{Laudenbach92,schwarz93} and the colourful historical presentation \cite{Bott88}).

In \cite{Witten82} Witten suggested that
the Thom-Smale complex could be recovered from the Witten instanton complex,
which is subcomplex of the deformed de Rham complex, consisting of eigenspaces of small
eigenvalues of the Witten Laplacian $D^{2}_{T}$
(cf. (\ref{1.4a})). This fact was first established rigorously by Helffer and Sj\"ostrand \cite[Prop.\,3.3]{Hellfer85}.

Later, Bismut and Zhang \cite{Bismut92} generalized the results
of Helffer and Sj\"ostrand for the Witten instanton complex
with values in a flat vector bundle.  This is an important step in their proof
and extension of the Cheeger-M\"uller theorem \cite{Cheeger79,M78} about
the equality of the Reidemeister and Ray-Singer metrics.

In \cite[\S 6]{Bismut94}, Bismut and Zhang gave a new and simple proof of
the isomorphism between the Witten instanton complex and the Thom-Smale complex by means of
resolvent estimates. We refer the readers to \cite[Chapter 6]{Zhang01} for a comprehensive study of the Witten
deformation following Bismut-Zhang's approach.

Let us go back to the general case of a compact (not necessarily orientable) manifold $M$
of dimension $n$ with boundary $\partial M\neq \emptyset$.
In this paper, a smooth function
$f: M\rightarrow \mathbb{R}$ is called a Morse function if the restrictions of $f$ to the interior and
boundary of $M$ are Morse functions in the usual sense and if $f$ has no critical point on $\partial M$
(cf.\ \cite{Chang95,Laudenbach10}).

In order to have Hodge theory for the de Rham Laplacian, we will
impose boundary conditions: absolute
boundary condition and relative boundary condition (cf.\ \cite[pp.\,361-371]{Taylor96}).
Denote by $\mathcal{H}_{a}$ (resp.\ $\mathcal{H}_{r}$)
the subspace of harmonic forms satisfying
the absolute boundary condition (resp.\ the relative boundary condition). Then
the space $\mathcal{H}_{a}$ (resp.\ $\mathcal{H}_{r}$) is isomorphic to the absolute
cohomology group $H_{dR}^{\bullet}(M, \mathbb{R})$
(resp. the relative cohomology group $H_{dR}^{\bullet}(M, \partial M; o(TM))$ with coefficients
twisted by the orientation bundle $o(TM)$ of $M$).

In \cite{Chang95}, Chang and Liu established the Morse inequalities
corresponding to the absolute and relative boundary conditions for orientable manifolds,
by using Witten deformation.

On the other hand, Laudenbach \cite{Laudenbach10} recently constructed a Thom-Smale complex
whose homology is isomorphic to the (absolute or relative) homology of $M$ with integral coefficients.
The construction uses a pseudo-gradient Morse-Smale vector field suitably adapted to the boundary.

We will prove that the Witten instanton complex constrained to the boundary conditions
and the Thom-Smale complex constructed by Laudenbach in \cite{Laudenbach10} are
canonically isomorphic.
This result is new for manifolds with non-empty boundary, while its
counterpart for closed manifolds appeared in \cite{Bismut92,Hellfer85}.
Our method consists of applying Bismut-Lebeau's localization techniques
\cite{Bismut74} along the lines of \cite{Zhang01}.

In order to state the results let us introduce some notations.
Let $f: M\rightarrow \mathbb{R}$ be a Morse function on $M$, and let $f|_{\partial M}$ be
its restriction to the boundary.
Let $C^{j}(f)$ (resp.\ $C^{j}(f|_{\partial M})$ be the set consisting of all critical points
of $f$ (resp.\ $f|_{\partial M}$) with index $j$.
Let $\nu$ be the outward normal vector field along $\partial M$. Denote by
\begin{equation*}
C_{-}^{j}(f|_{\partial M})=\{p\in C^{j}(f|_{\partial M}): (\nu f)(p)<0\}\,,\;\;
C_{+}^{j}(f|_{\partial M})=\{p\in C^{j}(f|_{\partial M}):  (\nu f)(p)>0\}\,,
\end{equation*}
and
\[
c_{j}=\#\,C^{j}(f)\,,\;\;p_{j}=\#\,C_{-}^{j}(f|_{\partial M})\,,\;\;q_{j}=\#\,C_{+}^{j}(f|_{\partial M})\,.
\]
Set $C(f)=\bigsqcup_{j=1}^{n}C^{j}(f)$ and
$C_{-}(f|_{\partial M})=\bigsqcup_{j=0}^{n-1}C_{-}^{j}(f|_{\partial M})$.

In order to define the Witten instanton complex we have to state two results about the spectrum of the Witten Laplacian. Let
\begin{align}
0\leqslant \lambda_{1}^j(T)\leqslant \lambda_{2}^j(T)\leqslant\cdots
\end{align}
be the eigenvalues of the Witten Laplacian $D_T^2$
with absolute boundary condition acting on $j$-forms.

\begin{thm}\label{t1.3}
There exists positive constants $a_{1}$ and $T_{1}$ such that for $T>T_{1}$,
and $j=0, 1, \cdots, n$, we have
\begin{align}\label{0.1}
\lambda_{\ell}^j(T)\geqslant a_{1}T^{2},\ \ \ \textup{for}\ \ell\geqslant c_{j}+p_{j}+1.
\end{align}
\end{thm}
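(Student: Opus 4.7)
The plan is to follow the Bismut-Lebeau / Helffer-Sjöstrand analytic localization strategy, extended to accommodate the absolute boundary condition. The goal is to exhibit, for each point of $C^j(f) \sqcup C_-^j(f|_{\partial M})$, an approximate ground state of $D_T^2$ in degree $j$ localized there, and to show that on the $L^2$-orthogonal complement of the span of these $c_j + p_j$ trial forms, $D_T^2$ is bounded below by $a_1 T^2$.

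First, fix disjoint neighborhoods of each point in $C(f) \sqcup C(f|_{\partial M})$ and choose a smooth partition of unity $\sum_\alpha \phi_\alpha^2 = 1$ subordinate to these neighborhoods together with a complementary ``far'' cutoff $\phi_\infty$. The IMS localization formula
\begin{equation*}
D_T^2 = \sum_\alpha \phi_\alpha D_T^2 \phi_\alpha - \sum_\alpha |\nabla \phi_\alpha|^2
\end{equation*}
reduces the spectral problem to the local pieces, with $\sum_\alpha |\nabla \phi_\alpha|^2 = O(1)$. On $\mathrm{supp}(\phi_\infty)$ we have $|\nabla f| \geq c > 0$, so the Bochner-Weitzenböck decomposition $D_T^2 = D^2 + T^2 |\nabla f|^2 + T A$ with $A$ a bounded zeroth-order operator gives $\phi_\infty D_T^2 \phi_\infty \geq \tfrac{c^2 T^2}{2} \phi_\infty^2$ for $T$ large.

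Near an interior critical point $p \in C^j(f)$, Morse coordinates reduce $D_T^2$ to a harmonic-oscillator-type model on $\mathbb{R}^n$ possessing a one-dimensional kernel in degree $j$ spanned by a Gaussian times the appropriate fermionic monomial. Near a boundary critical point $p \in C^j(f|_{\partial M})$, I would use Fermi coordinates $(x', t)$ with $t \geq 0$, $t = 0$ on $\partial M$, $\nu = -\partial_t$, combined with a Morse chart for $f|_{\partial M}$ at $p$, so that
\begin{equation*}
f(x', t) = f(p) + \tfrac{1}{2} \textstyle\sum_{i=1}^{n-1} \varepsilon_i x_i^2 - (\nu f)(p)\, t + O(|x'|^3 + t^2).
\end{equation*}
The model then factors as a harmonic oscillator on $\mathbb{R}^{n-1}$ tensored with a one-dimensional half-line operator involving linear $f$ and the absolute boundary condition. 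A direct analysis of the half-line factor (in the spirit of \cite{Chang95}) shows that the absolute BC admits an approximate exponentially localized bound state in degree equal to the tangential Morse index precisely when $(\nu f)(p) < 0$, i.e., when $p \in C_-^j(f|_{\partial M})$; when $(\nu f)(p) > 0$ no such mode is compatible with the boundary condition and the localized spectrum grows with $T$.

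Assembling the pieces, a min-max argument with the $c_j + p_j$ trial forms built above yields at most $c_j + p_j$ small eigenvalues, while the IMS formula together with the local lower bounds (harmonic oscillator model, half-line analysis, far-region Bochner) gives $D_T^2 \geq a_1 T^2$ on the $L^2$-orthogonal complement. The step I expect to be most delicate is the half-line analysis: one must carefully verify the sign-dependent dichotomy between $C_-^j$ and $C_+^j$, tracking how the Clifford action in the normal direction, the absolute boundary condition, and the sign of $(\nu f)(p)$ interact to produce a bound state exactly in the $C_-^j$ case and the correct fermionic degree. Once this is in place, the remaining arguments follow the well-known Bismut-Lebeau localization scheme used in \cite{Bismut94, Zhang01}.
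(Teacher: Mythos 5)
Your route is the Chang--Liu one (IMS localization at every critical point, local models, min-max), not the paper's, which instead splits $D_T$ into the four blocks $D_{T,1},\dots,D_{T,4}$ relative to the span $E_T$ of explicit cut-off model ground states (Proposition \ref{t6.1}) and feeds those block estimates into the min-max principle. The difference matters because your sketch omits the one ingredient that makes the boundary case genuinely harder than the closed case: the far-region bound. For $s$ with $s_{\textup{norm}}=0$, the quadratic form of $D_T^2$ is not just $\|ds\|^2+\|\delta s\|^2+T^2\||\nabla f|s\|^2+T\langle Qs,s\rangle$; by Green's formula it also contains the boundary term $T\int_{\partial M}(\nu f)\,|s_{\textup{tan}}|^2\,dv_{\partial M}$, cf.\ (\ref{2.2}). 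On $\supp\phi_\infty\cap\partial M$ the sign of $\nu f$ is unconstrained (it is negative wherever the gradient points outward away from critical points of $f|_{\partial M}$), and this term is $T$ times a boundary trace, which a crude trace/absorption argument only converts into $-\varepsilon(\|ds\|^2+\|\delta s\|^2)-CT^2\|s\|^2$ with a constant $C$ that has no reason to be smaller than $\inf|\nabla f|^2$ on the far region; so your claim $\phi_\infty D_T^2\phi_\infty\geqslant\tfrac{c^2T^2}{2}\phi_\infty^2$ is unjustified as stated. The paper's Proposition \ref{t4.2} exists precisely for this: away from $C_-(f|_{\partial M})$ one has the pointwise strict inequality $-(\nu f)<(1-\eta)|\nabla f|$ (\ref{2.2b}), and comparison with an auxiliary function $\tilde f$ satisfying (\ref{2.2c}) (Le Peutrec's trick) turns it into the lower bound (\ref{2.2e}) for the boundary term. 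Note also that the paper never needs a local model at $C_+(f|_{\partial M})$: those points stay in the far region, where (\ref{2.2b}) is automatic since $-(\nu f)<0<|\nabla f|$; your half-line analysis at $C_+$ is correct in substance (the Robin term there has the favourable sign), but it does not remove the need to control the boundary term on the rest of $\partial M$.

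Second, your final assembly overclaims the power of $T$. Near each point of $C^j(f)\cup C^j_-(f|_{\partial M})$ the only local input available is that of Propositions \ref{t2.1} and \ref{t2.3}: a one-dimensional kernel with spectral gap $2T$ for the model Laplacian. Hence IMS plus your local bounds gives, on the orthogonal complement of the $c_j+p_j$ trial forms, only $D_T^2\geqslant cT$, not $cT^2$: the far region contributes $cT^2$, but the neighbourhoods of the critical points contribute no more than $2T+O(1)$. This cannot be repaired: whenever $c_j+p_j\geqslant 1$, a cut-off first excited oscillator state times $e^1\wedge\dots\wedge e^j$ (using the half-space model and a cutoff constant in $x_n$ near $x_n=0$ in the $C_-$ case, so that the boundary conditions (\ref{1.22a}) persist) is a quasimode of $D_T^2$ in degree $j$ with eigenvalue $2T+O(e^{-cT})$, so $\lambda^j_{c_j+p_j+1}(T)=O(T)$ and (\ref{0.1}) can only hold with $a_1T$ in place of $a_1T^2$. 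Be aware that the paper's own route has the same issue: the bound $\|D_{T,4}s\|\geqslant b_3T\|s\|$ of Proposition \ref{t6.1}(3), taken from \cite[Prop.\,5.6]{Zhang01}, is in that reference $\|D_{T,4}s\|\geqslant b_3\sqrt{T}\,\|s\|$, again because the first-order model operator only has a gap $\sqrt{2T}$. So you should aim for a lower bound of order $T$ (this is what Chang--Liu and Le Peutrec prove, a gap of order $h$ in semiclassical normalization), which is all that the later arguments (Proposition \ref{t1.2}, Theorem \ref{t1.0}) require, since they only use a gap above a fixed constant $C_0$.
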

The estimate (\ref{0.1}) was obtained by Chang and Liu (cf. \cite[\S 3, Th.\,2]{Chang95})
by localization and the min-max principle %to compare to a model operator
and by Le Peutrec (cf. \cite[Th.\,3.5]{Peutrec10})
by delicate constructions of quasimodes and the WKB method.
We will give here a short proof based on elementary spectral estimates.

Our second result provides a refined estimate of the lower part of the spectrum
of the Witten Laplacian.
\begin{thm}\label{t1.4}
There exist positive constants $a_{2}, a_{3}$ and $T_{2}$ such that for $T\geqslant T_{2}$ and
$j=0, 1, \cdots, n$,
\begin{align}\label{0.2}
\lambda_{\ell}^j(T)\leqslant a_{2}e^{-a_{3}T}, \ \ \textup{for}\ \ell\leqslant c_{j}+p_{j}.
\end{align}
\end{thm}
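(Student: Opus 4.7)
The plan is to construct $c_{j}+p_{j}$ approximate eigenforms (quasimodes) localized near the critical points in $C^{j}(f)\cup C_{-}^{j}(f|_{\partial M})$, and then apply the min-max principle together with Theorem~\ref{t1.3} to deduce (\ref{0.2}).

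For each $p\in C^{j}(f)$ I will use Morse coordinates to bring $f$ to the standard form $f(p)-\tfrac{1}{2}\sum_{i\leq j}(x^{i})^{2}+\tfrac{1}{2}\sum_{i>j}(x^{i})^{2}$, transplant the ground state $e^{-T|x|^{2}/2}\,dx^{1}\wedge\cdots\wedge dx^{j}$ of the Euclidean Witten Laplacian, and multiply by a smooth cutoff supported in the coordinate chart. This produces $\rho_{p,T}\in\Omega^{j}(M)$ of unit $L^{2}$-norm with $\|D_{T}\rho_{p,T}\|_{L^{2}}=O(e^{-aT})$, exactly as in the closed case \cite{Zhang01}.

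For $p\in C_{-}^{j}(f|_{\partial M})$ I will work in adapted coordinates $(y^{1},\ldots,y^{n-1},x)$, with $x\geq 0$ the inward normal coordinate and the $y^{i}$ Morse coordinates for $f|_{\partial M}$ at $p$. Since $\nu=-\partial_{x}$ and $(\nu f)(p)<0$, we have $c:=(\partial_{x}f)(p)>0$ and, to leading order, $f \approx f(p)-\tfrac{1}{2}\sum_{i\leq j}(y^{i})^{2}+\tfrac{1}{2}\sum_{i>j}(y^{i})^{2}+c\,x$. A direct calculation then shows that on the half-space model the $j$-form
\begin{equation*}
\rho_{p}^{(0)}=C_{T}\,e^{-T(|y|^{2}/2+cx)}\,dy^{1}\wedge\cdots\wedge dy^{j}
\end{equation*}
is annihilated by both $d_{T}$ and $d_{T}^{*}$, is $L^{2}$ precisely because $c>0$, and trivially satisfies the absolute boundary condition $i_{\nu}\rho_{p}^{(0)}=0$ since it has no $dx$-component. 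Multiplying by a cutoff yields a quasimode $\rho_{p,T}$ lying in the absolute boundary domain. Observe that for $p\in C_{+}^{j}(f|_{\partial M})$ the analogous ansatz is not $L^{2}$ on $[0,\infty)$ (since $c<0$), which is why these critical points do not contribute.

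Next I will verify, by standard Gaussian-concentration estimates, that $\langle\rho_{p,T},\rho_{q,T}\rangle=\delta_{pq}+O(e^{-aT})$ and $\|D_{T}\rho_{p,T}\|_{L^{2}}\leq Ce^{-aT}$; the latter holds because $\rho_{p}^{(0)}$ is exactly killed by $D_{T}$ on the model, so $D_{T}\rho_{p,T}$ is supported on the annulus where the cutoff derivative is nonzero and there the Gaussian weight is exponentially small in $T$. The span $E_{T}$ of $\{\rho_{p,T}\}$ is then $(c_{j}+p_{j})$-dimensional for large $T$, lies in the absolute boundary domain, and the Rayleigh quotient of $D_{T}^{2}$ on $E_T$ is at most $Ce^{-aT}$; the min-max principle delivers (\ref{0.2}). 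The main obstacle is the boundary construction: recognizing that the normal expansion of $f$ at a boundary critical point is \emph{linear} rather than quadratic, identifying the correct half-space ground state, and understanding why the combination of the absolute boundary condition with $L^{2}$ integrability selects precisely the $C_{-}^{j}$ critical points. The remaining ingredients---Gaussian cutoff error bounds and the min-max argument---are routine once the models are in place.
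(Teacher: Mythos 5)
Your overall strategy---Gaussian quasimodes at the interior critical points, half-space ground states of the form $e^{-T(|y|^{2}/2+cx)}\,dy^{1}\wedge\cdots\wedge dy^{j}$ at the points of $C_{-}^{j}(f|_{\partial M})$, verification of the absolute boundary condition, and then min-max---is essentially the paper's route: the quasimodes are those of \eqref{5.2}, the exponential bound on the Rayleigh quotient is the $D_{T,3}$-estimate of Proposition \ref{t6.1}, and the min-max step is \eqref{5.15}--\eqref{5.16}. Your identification of the boundary model, the role of $(\nu f)(p)<0$ for $L^{2}$-integrability, and the reason why $C_{+}^{j}(f|_{\partial M})$ does not contribute are all correct.

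There is, however, a genuine gap in the exponential smallness step. You take coordinates at $p\in C_{-}^{j}(f|_{\partial M})$ in which $f$ has the model form only ``to leading order'', and then conclude that $D_{T}\rho_{p,T}$ is supported in the cutoff annulus because $\rho_{p}^{(0)}$ is exactly killed ``on the model''. That inference requires $f$ (and the metric) to coincide \emph{exactly} with the Euclidean model on the support of the cutoff. If $f$ agrees with the model only to leading order, then $D_{T}\rho_{p,T}$ contains the extra term $T\,\widehat{c}\bigl(\nabla(f-f_{\mathrm{model}})\bigr)\rho_{p,T}$, which is far from exponentially small: generic second-order errors such as $x\,y_{i}$ give $|\nabla(f-f_{\mathrm{model}})|=O(|Z|)$, and $T\,|Z|\,e^{-T(|y|^{2}/2+cx)}$ has $L^{2}$-norm of order $T^{1/2}\|\rho_{p,T}\|$, so the Rayleigh quotient is $O(T)$, not $O(e^{-aT})$; even cubic errors only give $O(1)$. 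The same problem occurs at interior points if the metric is not Euclidean in the Morse coordinates, because $\delta_{T}$ depends on the metric and $\delta(dx^{1}\wedge\cdots\wedge dx^{j})\neq 0$ in general, so ``exactly as in the closed case'' already presupposes the adapted metric. The paper removes both obstructions by construction: Laudenbach's normal forms \eqref{2.3a} and \eqref{2.3b} make $f$ \emph{equal} to the model on $U_{p}$, and $g^{TM}$ is chosen Euclidean on $U_{p}$ at the start of Section \ref{s2}, so that $D_{T}|_{U_{p}}=D_{T,v}|_{U_{p}}$, resp.\ $D_{T,v_{1}}|_{U_{p}}$ (see \eqref{3.4}, \eqref{3.9}), and consequently $D_{T}\rho_{p,T}$ is a pure cutoff term as in \eqref{5.13}--\eqref{5.14}. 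To close your argument you must either invoke such an exact normal form near the boundary critical points (a boundary Morse lemma with linear normal part) together with a metric that is flat in these coordinates, or else replace the naive quasimodes by a genuine WKB/Agmon-type construction; with only ``leading order'' control the claimed bound \eqref{0.2} does not follow.
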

The estimate (\ref{0.2}) for $j=0$ was obtained by D. Le Peutrec via
WKB analysis (cf.\ \cite[Th.\,1.0.3]{Peutrec10}).

From Theorems \ref{t1.3} and \ref{t1.4} follows that the Witten Laplacian $D_T^2$
acting on $j$-forms has a spectral gap: the upper part of the spectrum grows with speed
$T^2$ and the lower part decays exponentially in $T$. Moreover,
for a given $C_{0}>0$, there exists $T_{0}>0$ such that for $T\geqslant T_{0}$,
the number of eigenvalues in $[0, C_{0})$ equals
$c_{j}+p_{j}$ (see Proposition \ref{t1.2}).
Let $F^{C_{0}}_{T, j}$ denote the
$(c_{j}+p_{j})$-dimensional vector space generated by the eigenspaces associated to the
eigenvalues lying in [0, $C_{0}$). It is easy to see that $(F^{C_{0}}_{T,\bullet}, d_{T})$ forms a complex, called
Witten instanton complex.

Let $(C^{\bullet}, \partial)$ denote the Thom-Smale complex constructed
in \cite{Laudenbach10} (cf.\ (\ref{2.32c})--(\ref{2.32d})).

Let $P_{\infty}$ be the natural morphism from the de Rham complex $(\Omega^{\bullet}(M), d)$ to
the Thom-Smale complex $(C^{\bullet}, \partial)$, defined by integration on the closure of the unstable manifolds
(cf. \S\ref{s5}):
\begin{align}\label{7.7b}
P_{\infty}(\alpha)=\sum_{p\in C(f)\cup C_{-}(f|_{\partial M})}[p]^{\ast}\int\limits_{\overline{W^{u}(p)}}\alpha
\in C^{\bullet},\ \ \text{for $\alpha\in \Omega^{\bullet}(M)$}\,.
\end{align}
Set
\begin{align}\label{7.7a}
P_{\infty, T}(\alpha)=P_{\infty}(e^{Tf}\alpha), \ \textup{for}\ \alpha\in F^{C_{0}}_{T,\bullet}\,.
\end{align}
The main result of this paper is as follows.
\begin{thm}\label{t1.0}
The map $P_{\infty, T}: (F^{C_{0}}_{T,\bullet}, d_{T})
\rightarrow (C^{\bullet}, \partial)$ is an isomorphism of complexes for $T$ large enough.
In particular, the chain map $P_{\infty}$
is a quasi-isomorphism between the de Rham complex $(\Omega^{\bullet}(M),d)$
and the Thom-Smale complex $(C^{\bullet},\partial)$.
\end{thm}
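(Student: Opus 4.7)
I plan to adapt the resolvent / localization strategy of Bismut--Zhang \cite{Bismut94}, as developed in \cite[Ch.~6]{Zhang01}, to handle the boundary critical contributions $C^{j}_{-}(f|_{\partial M})$.

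First, $P_{\infty,T}$ is automatically a chain map: Stokes' theorem applied to the stratification of $\overline{W^{u}(p)}$ by unstable manifolds of lower-index critical points, which comes from Laudenbach's boundary-adapted Morse--Smale condition, shows $P_{\infty}$ is a chain map, and since $d_{T}=e^{-Tf}d\,e^{Tf}$ one gets
\[
P_{\infty,T}(d_{T}\alpha)=P_{\infty}(d(e^{Tf}\alpha))=\partial P_{\infty}(e^{Tf}\alpha)=\partial P_{\infty,T}(\alpha).
\]
By Theorems \ref{t1.3} and \ref{t1.4} we have $\dim F^{C_{0}}_{T,j}=c_{j}+p_{j}=\dim C^{j}$, so it suffices to prove that $P_{\infty,T}$ is injective for $T$ large. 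For each $p\in C(f)\cup C_{-}(f|_{\partial M})$ of index $j$ I would construct a normalized model $j$-form $\alpha_{p,T}$, supported in a small neighbourhood of $p$, obeying absolute boundary conditions, and with $\|D_{T}\alpha_{p,T}\|_{L^{2}}=O(e^{-cT})$ for some $c>0$. For an interior critical point this is the classical Witten Gaussian $\sim T^{n/4}e^{-T|y|^{2}/2}\,dy^{1}\wedge\cdots\wedge dy^{j}$ in Morse coordinates. For a boundary critical point $p\in C^{j}_{-}(f|_{\partial M})$, boundary-adapted Morse coordinates $(y',y^{n})$ with $\{y^{n}\geq 0\}=M$ locally realize $f$ as a standard Morse form on $\partial M$ plus a strictly decreasing contribution in $y^{n}$ (because $(\nu f)(p)<0$); the associated one-dimensional Witten Laplacian on $[0,\infty)$ with absolute boundary condition then admits a unique exponentially localized ground state in the appropriate form-degree, which tensored with the interior Gaussian provides $\alpha_{p,T}$.

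Combining the smallness of $D_{T}\alpha_{p,T}$ with the spectral gap of Theorem \ref{t1.3} and the Bismut--Lebeau resolvent estimates shows that the orthogonal projection $P_{T}$ onto $F^{C_{0}}_{T,\bullet}$ is an isomorphism from $E_{T}:=\mathrm{span}\{\alpha_{p,T}\}$, with $P_{T}\alpha_{p,T}=\alpha_{p,T}+O(e^{-cT})$. I would then evaluate the matrix $A_{q,p}:=\int_{\overline{W^{u}(q)}}e^{Tf}P_{T}\alpha_{p,T}$ for $p,q$ of index $j$. Ordering the critical points by decreasing $f$-value, the containment $\overline{W^{u}(q)}\subset\{f\leq f(q)\}$ forces $A_{q,p}$ to be exponentially small whenever $f(q)<f(p)$, so the matrix is upper triangular up to exponentially small error. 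The diagonal entry $A_{p,p}$ is computed by Laplace's method at the unique non-degenerate maximum of $f|_{\overline{W^{u}(p)}}$ at $p$: for interior $p$ the Gaussian integrates to a universal nonzero constant, and for boundary $p$ the corner structure of $\overline{W^{u}(p)}$ at $\partial M$ provided by Laudenbach, together with $(\nu f)(p)<0$, again yields a nonzero universal value. Thus $A$ is invertible for $T$ large, so $P_{\infty,T}$ is an isomorphism. The quasi-isomorphism claim for $P_{\infty}$ then follows, since $F^{C_{0}}_{T,\bullet}\hookrightarrow(\Omega^{\bullet}(M),d_{T})$ is a quasi-isomorphism by Hodge theory with absolute boundary conditions and multiplication by $e^{Tf}$ identifies $d_{T}$ with $d$.

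I expect the decisive new difficulty, compared to the closed case, to lie in the boundary analysis: constructing the half-line eigenform at $p\in C^{j}_{-}(f|_{\partial M})$ with the correct form-degree and absolute boundary condition, verifying its approximate-eigenvector property, and computing the Laplace integral near the corner $\partial M\cap\overline{W^{u}(p)}$. In each of these steps the sign condition $(\nu f)(p)<0$ is essential: with the opposite sign the half-line problem admits no low-lying eigenvalue, correctly reflecting that $p\in C^{j}_{+}(f|_{\partial M})$ does not contribute to the absolute instanton complex.
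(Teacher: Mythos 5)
Your proposal is essentially the paper's own proof: your quasimodes $\alpha_{p,T}$ are the cut-off model forms $\rho_{p,T},\rho_{q,T}$ of (\ref{5.2}), your statement $P_{T}\alpha_{p,T}=\alpha_{p,T}+O(e^{-cT})$ is Lemma \ref{t7.1}, your matrix $A_{q,p}=\int_{\overline{W^{u}(q)}}e^{Tf}P_{T}\alpha_{p,T}$ is exactly the matrix of $P_{\infty,T}e_{T}$ whose invertibility the paper reads off from the asymptotics (\ref{7.7}), and the chain-map and quasi-isomorphism steps coincide with the paper's. Only two harmless points to tighten: the invertibility argument should be run after rescaling the rows by $e^{-Tf(q)}$ (as (\ref{7.7}) does), since ``triangular plus an absolutely exponentially small error'' alone does not give invertibility when the diagonal entries live on the wildly different scales $e^{Tf(p)}$; and with $y^{n}\geq 0$ pointing inward and $(\nu f)(p)<0$ the function $f$ is increasing, not decreasing, in $y^{n}$, which is precisely what makes the half-line ground state $e^{-Ty^{n}}$ compatible with the absolute boundary condition, as your closing remark correctly indicates.
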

Due to the de Rham isomorphism $H^{\bullet}_{dR}(M,\mathbb{R})\cong H^{\bullet}(M,\mathbb{R})$
between de Rham cohomology and singular cohomology, we have thus an analytic proof of
the fact that the cohomology of the Thom-Smale complex $(C^{\bullet},\partial)$ coincides with the singular cohomology of $M$.
This was proved by topological methods by Laudenbach \cite{Laudenbach10}.

Let $\beta_{j}(M)$ denote the $j$-th Betti number of the de Rham complex, i.e.,
$\beta_{j}(M)=\dim H^{j}_{dR}(M, \mathbb{R})$.
As a by-product of the proof of Theorem \ref{t1.0}, we obtain the following Morse inequalities
for manifolds with boundary:

\begin{cor}\label{t1.1}
For any $k=0,1,\cdots, n$, we  have
\begin{align}\label{1.1}
\sum_{j=0}^{k}(-1)^{k-j}\beta_{j}(M) &\leqslant \sum_{j=0}^{k}(-1)^{k-j}(c_{j}+p_{j}),
\end{align}
with equality for $k=n$.
\end{cor}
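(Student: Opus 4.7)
The strategy is to deduce the inequalities from the classical algebraic (strong) Morse inequalities applied to the Witten instanton complex $(F^{C_{0}}_{T,\bullet}, d_{T})$. Two ingredients are required: the dimension count $\dim F^{C_{0}}_{T,j} = c_{j} + p_{j}$ for all $j$, and the identification of the cohomology of $(F^{C_{0}}_{T,\bullet}, d_{T})$ with the absolute de Rham cohomology $H^{\bullet}_{dR}(M,\mathbb{R})$. Once these are established, the inequalities are formal.

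The first ingredient follows by combining Theorem \ref{t1.3} (the $(c_{j}+p_{j}+1)$-st eigenvalue of $D_{T}^{2}$ acting on $j$-forms is bounded below by $a_{1}T^{2}$) with Theorem \ref{t1.4} (the first $c_{j}+p_{j}$ eigenvalues are $O(e^{-a_{3}T})$): for $T$ sufficiently large and any fixed $C_{0}>0$, exactly $c_{j}+p_{j}$ eigenvalues of $D_{T}^{2}$ on $j$-forms satisfying the absolute boundary condition lie in $[0, C_{0})$, giving $\dim F^{C_{0}}_{T,j} = c_{j}+p_{j}$ as announced in Proposition \ref{t1.2}. For the cohomological ingredient, Hodge theory for $D_{T}^{2}$ with absolute boundary condition identifies $H^{\bullet}(F^{C_{0}}_{T,\bullet}, d_{T})$ with the cohomology of the deformed de Rham complex $(\Omega^{\bullet}(M), d_{T})$ subject to the absolute boundary condition, because the spectral projector onto the $[0,C_{0})$-subspace is a quasi-isomorphism (the orthogonal complement is $d_{T}$-invariant and acyclic by exact Hodge decomposition away from the kernel). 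Conjugation by the smooth positive function $e^{Tf}$ then supplies a chain isomorphism with the ordinary de Rham complex satisfying the absolute boundary condition, whose cohomology is $H^{\bullet}_{dR}(M,\mathbb{R})$ with Betti numbers $\beta_{j}(M)$.

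With both ingredients in hand, I would invoke the purely algebraic strong Morse inequalities: for any bounded cochain complex $(V^{\bullet}, \delta)$ of finite-dimensional vector spaces, the alternating partial sums of $\dim H^{j}$ are dominated by those of $\dim V^{j}$, with equality once the full range is taken. Applied to $V^{j}=F^{C_{0}}_{T,j}$ with $\delta=d_{T}$, this gives (\ref{1.1}) directly, while the equality at $k=n$ reduces to the Euler characteristic identity
\[
\chi(M)=\sum_{j=0}^{n}(-1)^{j}\beta_{j}(M)=\sum_{j=0}^{n}(-1)^{j}(c_{j}+p_{j}).
\]
The only subtle point is the compatibility between the absolute boundary condition and the conjugation by $e^{Tf}$; this holds because $f$ is smooth up to $\partial M$, so multiplication by $e^{Tf}$ preserves the tangential/normal decomposition of forms along the boundary and hence sends the absolute boundary condition to itself. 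The analytic heart of the corollary lies entirely in the spectral gap established by Theorems \ref{t1.3} and \ref{t1.4}; everything else is Hodge theory and linear algebra.
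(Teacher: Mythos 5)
Your proposal is correct and follows essentially the same route as the paper's proof: form the Witten instanton complex $(F^{C_{0}}_{T,\bullet},d_{T})$, identify its cohomology with $H^{\bullet}_{dR}(M,\mathbb{R})$ by finite-dimensional Hodge theory, the Hodge theorem for the absolute boundary condition and conjugation by $e^{Tf}$, and then apply the standard algebraic Morse inequalities. The only cosmetic difference is that you extract the dimension count $\dim F^{C_{0}}_{T,j}=c_{j}+p_{j}$ directly from Theorems \ref{t1.3} and \ref{t1.4}, whereas the paper invokes it as Proposition \ref{t1.2}; the content is the same.
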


The Morse inequalities (\ref{1.1}) were first established in \cite[\S 4]{Chang95}.
A topological proof of (\ref{1.1}) appeared in \cite{Laudenbach10}. Denote by $\beta_{j}(M, \partial M)$
the $j$-th Betti number of the relative de Rham complex with coefficients twisted by the orientation bundle,
i.e., $\beta_{j}(M, \partial M)=\dim H^{j}_{dR}(M, \partial M; o(TM))$.
By replacing the Morse function $f$ by $-f$
and applying the Poincar\'e duality theorem, we get
from Corollary \ref{t1.1} the following result.
\begin{cor} \label{t1.2a}
For any $k=0,1,\cdots, n$, we  have
\begin{align}\label{1.1a}
\sum_{j=0}^{k}(-1)^{k-j}\beta_{j}(M, \partial M) &\leqslant \sum_{j=0}^{k}(-1)^{k-j}(c_{j}+q_{j-1}),
\end{align}
with equality for $k=n$.%in $\textup{(\ref{1.1a})}$ holds.
\end{cor}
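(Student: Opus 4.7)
The plan is to apply Corollary \ref{t1.1} with $f$ replaced by $-f$ and then transform the result via Poincar\'e--Lefschetz duality. Under the sign change $f\to -f$, every critical point is preserved but its Morse index is reversed: an interior critical point of index $j$ for $f$ has index $n-j$ for $-f$, so that $c_j(-f)=c_{n-j}(f)$; a boundary critical point of index $j$ for $f|_{\partial M}$ has index $(n-1)-j$ for $(-f)|_{\partial M}$, and since $\nu(-f)=-\nu f$ the sign of the normal derivative flips, so that $p_j(-f)=q_{n-1-j}(f)$.

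Applying Corollary \ref{t1.1} to $-f$ therefore yields, for every $K=0,1,\ldots,n$,
\begin{align*}
\sum_{j=0}^{K}(-1)^{K-j}\beta_{j}(M)\leqslant \sum_{j=0}^{K}(-1)^{K-j}\bigl(c_{n-j}+q_{n-1-j}\bigr),
\end{align*}
with equality at $K=n$. Invoking Poincar\'e--Lefschetz duality $H^{j}_{dR}(M,\mathbb{R})\cong H^{n-j}_{dR}(M,\partial M;o(TM))$, which gives $\beta_j(M)=\beta_{n-j}(M,\partial M)$, and reindexing via $i=n-j$, $k=n-K$, I obtain the ``reverse'' Morse inequality
\begin{align*}
\sum_{i=k}^{n}(-1)^{i-k}\beta_{i}(M,\partial M)\leqslant \sum_{i=k}^{n}(-1)^{i-k}\bigl(c_{i}+q_{i-1}\bigr)
\end{align*}
for every $k=0,\ldots,n$, with equality at $k=0$.

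The last step is purely algebraic: convert these reverse Morse inequalities into the forward form (\ref{1.1a}). Writing $a_j=(c_j+q_{j-1})-\beta_j(M,\partial M)$, the equality at $k=0$ reads $\sum_{j=0}^{n}(-1)^{j}a_j=0$, so
\begin{align*}
\sum_{j=0}^{k}(-1)^{k-j}a_j=(-1)^{k}\sum_{j=0}^{k}(-1)^{j}a_j=-(-1)^{k}\sum_{j=k+1}^{n}(-1)^{j}a_j=\sum_{j=k+1}^{n}(-1)^{j-k-1}a_j,
\end{align*}
which is exactly the reverse inequality at level $k+1$, hence nonnegative; for $k=n$ the first expression vanishes by the Euler characteristic identity. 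No genuine analytic difficulty arises beyond the input of Corollary \ref{t1.1}; the only point requiring care is the bookkeeping of indices and signs under $f\to -f$, together with the standard identification of the orientation-twisted relative cohomology on the dual side of Poincar\'e--Lefschetz duality.
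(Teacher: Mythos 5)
Your proposal is correct and follows essentially the same route as the paper: apply Corollary \ref{t1.1} to the Morse function $-f$ (which swaps $c_j\mapsto c_{n-j}$, $p_j\mapsto q_{n-1-j}$) and then use Poincar\'e--Lefschetz duality $H_{dR}^{\bullet}(M,\mathbb{R})\simeq H^{n-\bullet}_{dR}(M,\partial M;o(TM))$. The paper leaves the reindexing and the passage from the ``reverse'' to the forward inequalities implicit, whereas you carry out this elementary bookkeeping explicitly; there is no substantive difference.
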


On the other hand, if we endow the Witten Laplacian $D^{2}_{T}$ with relative boundary
condition, we will obtain analogue results of Theorems \ref{t1.3}, \ref{t1.4} and \ref{t1.0}.
Then we also derive the inequalities (\ref{1.1a}). See Remark \ref{t7.2}.

This paper is organized as follows. In Section \ref{s5}, we introduce
the Thom-Smale complex constructed by Laudenbach in \cite{Laudenbach10}.
Section \ref{s1} is devoted to
some calculations of the kernels of the Witten Laplacian on Euclidean spaces.
The results will be applied to the Witten Laplacian
on manifolds around the critical points in $C(f)$ and $C_{-}(f|_{\partial M})$.
In Section \ref{s2}, we state a crucial result (Proposition \ref{t1.2}) concerning the lower part of
the spectrum of the Witten Laplacian for $T$ large.
We also prove Corollaries \ref{t1.1} and \ref{t1.2a} there.
In Section \ref{s6}, we prove Proposition
\ref{t1.2} and then finish the proof of Theorem \ref{t1.0}.

\section{The Thom-Smale complex constructed by Laudenbach}\label{s5}

In this section, we introduce the Thom-Smale complex constructed by Laudenbach in \cite{Laudenbach10}.
By \cite[\S 2.1]{Laudenbach10}, there exists a vector field $X$ on $M$ satisfying the following conditions.
\\ (1) $(Xf)(\cdot)<0$ except at critical points in $C(f)\cup C_{-}(f|_{\partial M})$;
\\ (2) $X$ points inwards along $\partial M$ except in a neighborhood in $\partial M$ of critical points
in $C_{-}(f|_{\partial M})$ where it is tangent to $\partial M$;
\\ (3) if $p\in C^{j}(f)$, then there exists a coordinate system $(x, U_{p})$ such that on $U_{p}$ we have
\begin{align}\label{2.3a}
f(x)=f(p)-\frac{x^{2}_{1}}{2}-\cdots-\frac{x^{2}_{j}}{2}+\frac{x^{2}_{j+1}}{2}+\cdots+\frac{x^{2}_{n}}{2}\,,
\end{align}
and
\begin{align}
X=\sum_{i=1}^{j}x_{i}\frac{\partial}{\partial x_{i}}-\sum_{i=j+1}^{n}x_{i}\frac{\partial}{\partial x_{i}}\,;
\end{align}
\\ (4) if $p\in C_{-}^{j}(f|_{\partial M})$, then there are coordinates
$x=(x', x_{n})\in \mathbb{R}^{n-1}\times \mathbb{R}_{+}$
on some neighborhood $U_{p}$ of $p$ such that on $U_{p}$,
\begin{align}\label{2.3b}
f(x)=f(p)-\frac{x^{2}_{1}}{2}-\cdots-\frac{x^{2}_{j}}{2}+\frac{x^{2}_{j+1}}{2}+\cdots+\frac{x^{2}_{n-1}}{2}+x_{n}
\end{align}
and
\begin{align}
X=\sum_{i=1}^{j}x_{i}\frac{\partial}{\partial x_{i}}-\sum_{i=j+1}^{n}x_{i}\frac{\partial}{\partial x_{i}};
\end{align}
\\ (5) $X$ is Morse-Smale in the sense that the global unstable manifolds and the local stable manifolds
are mutually transverse. Denote by $W^{u}(p)$ (resp.\ $W_{loc}^{s}(p)$) the unstable manifold
(resp.\ the local stable manifold) of $p$ which by definition, consists of all the
flow lines of $X$ that emanate from $p$
(resp. end at $p$).
We denote by $\overline{W^{u}(p)}$ the closure of $W^{u}(p)$ in $M$.

If $p\in C^{j}(f)$, then we get from the condition (3) that locally,
\begin{align}\label{2.32a}
W^{u}(p)=\Big\{(x_{1}, \cdots, x_{j}, 0, \cdots, 0)\Big\}\subset \mathbb{R}^{n}.
\end{align}
\noindent In view of the conditions (1) and (2),
$W^{u}(p)$ does not intersect with the boundary except at critical points in $C_{-}(f|_{\partial M})$.

If $p\in C_{-}^{j}(f|_{\partial M})$, then the condition (4) implies that
\begin{align}\label{2.32b}
W^{u}(p)=\Big\{(x_{1}, \cdots, x_{j}, 0, \cdots, 0)\Big\}\subset \mathbb{R}^{n-1}\times \mathbb{R}_{+}\,,
\end{align}
that is,
$W^{u}(p)$ lies completely in the closed submanifold $\partial M$.
Therefore, the results \cite[Prop.\,2]{Laudenbach92}
about the structure of $\overline{W^{u}(p)}$ still hold for $p\in C^{j}(f)\cup C_{-}^{j}(f|_{\partial M})$, i.e.,
$\overline{W^{u}(p)}$ is a $j$-dimensional submanifold of $M$ with conical singularities and
$\overline{W^{u}(p)}\backslash W^{u}(p)$ is stratified by unstable manifolds of critical points of
index strictly less than $j$.

If $q\in C^{j+1}(f)$ and $p\in C^{j}(f)$ (resp.\
$q\in C^{j+1}(f)$ and $p\in  C_{-}^{j}(f|_{\partial M})$, resp.\
$q\in  C_{-}^{j+1}(f|_{\partial M})$ and $p\in  C_{-}^{j}(f|_{\partial M})$),
then it is a consequence of the condition (5) that
the intersection of $W^{u}(q)$ with $W_{loc}^{s}(p)$ consists of a finite set of
integral curves of $X$. Choose an orientation on $W^{u}(q)$ and $W^{u}(p)$, respectively.
The orientation of $W_{loc}^{s}(p)$ is uniquely determined. Take $\gamma \in W^{u}(q)\cap W_{loc}^{s}(p)$.
For $x\in \gamma$, denote by $B_{x}$ the sets such that $(X_{x}, B_{x})$ forms an positive
basis of $T_{x}\big(W^{u}(q)\big)$. Let $A_{x}$ denote a positive basis of
$T_{x}\big(W_{loc}^{s}(p)\big)$. Then $(A_{x}, B_{x})$ forms a basis of $T_{x}M$. Set $n_{\gamma}(q, p)=1$
if $(A_{x}, B_{x})$ denotes a positive orientation of $T_{x}M$. Otherwise take $n_{\gamma}(q, p)=-1$.
Let $n(q, p)$ be the sum of $n_{\gamma}(q, p)$ over all integral curves $\gamma$ from $q$ to $p$.
Set $n(q, p)=0$ if $q\in C_{-}^{j+1}(f|_{\partial M})$ and $p\in C^{j}(f)$.

For $p\in C^{j}(f)\cup C_{-}^{j}(f|_{\partial M})$, let $[p]$ be the real line generated by $p$, and let
$[p]^{\ast}$ be the line dual to $[p]$. As in \cite{Laudenbach10}, set
\begin{align}\label{2.32c}
C^{j}=\bigoplus_{p\in C^{j}(f)\cup C_{-}^{j}(f|_{\partial M})}[p]^{\ast}.
\end{align}
The boundary morphism $\partial$ from $C^{j}$ to $C^{j+1}$ is given by
\begin{align}\label{2.32d}
\partial [p]^{\ast}=\sum_{q\in C^{j+1}(f)\cup C_{-}^{j+1}(f|_{\partial M})}n(q, p)[q]^{\ast}.
\end{align}
It is a consequence of \cite[\S 2.2, Prop.\,]{Laudenbach10}
that $(C^{\bullet}, \partial)$ is a chain complex.

For $a>0$, denote by $B^{M}(p, 4a)$ the open ball in $M$ centered at the point
$p\in M$ with the radius $4a$.
In the sequel, we always take for simplicity $U_{p}=B^{M}(p, 4a)$ in the condition (2)
and $U_{p}=B^{M}(p, 4a)\cap \partial M$ in the condition (3).

\section{Some Calculations on Vector Spaces}\label{s1}

In this section, we calculate the kernels of the Witten Laplacian on vector spaces.
The results  will be applied to the Witten Laplacian
around the critical points in $C^{j}(f)$ and $C^{j}_{-}(f|_{\partial M})$.

\subsection{The Witten Laplacian on $\mathbb{R}^{n}$}

Let $V$ be an $n$-dimensional real vector space endowed with an
Euclidean scalar product.  Let $V^{-}, V^{+}$ be two subspaces such
that $V=V^{-} \oplus V^{+}$ and $\textrm{dim} \ V^{-}=j$. Take
$e_{1}, \cdots, e_{n}$ as an orthonormal basis on $V$ such that
$V^{-}$ is spanned by $e_{1}, \cdots, e_{j}$. Let $f$ be a
smooth function on $V$ given by:
\begin{align}\label{2.3}
f(Z)=f(0)-\frac{{\left\vert Z^{-}\right\vert}^2}{2}+
\frac{{\left\vert Z^{+}\right\vert}^2}{2}, \end{align} where
$Z^{-}=(Z_{1}, \cdots, Z_{j}), Z^{+}=(Z_{j+1}, \cdots,
Z_{n})$ and $(Z^{-}, Z^{+})$ denotes the coordinate functions on $V$
corresponding to the decomposition $V=V^{-}\oplus V^{+}$.

Let $Z=\sum_{\alpha=1}^{n}Z_{\alpha}e_{\alpha}$
be the radial vector field on $V$. There is a
natural Euclidean scalar product on $\Lambda V^{\ast}$. Let
$dv_{V}(Z)$ be the volume form on $V$. Denote by $L^{2}(\Lambda V^{\ast})$
the set of the square integrable sections of $\Lambda V^{\ast}$ over $V$. For
$w_{1}, w_{2} \in L^{2}(\Lambda V^{\ast})$, set
\begin{align}\label{2.4}
\big\langle w_{1}, w_{2}\big\rangle=
\int_{V}\langle w_{1}, w_{2}\rangle_{\Lambda V^{\ast}}dv_{V}(Z).
\end{align}

Let $C(V)$ be the Clifford algebra of $V$, i.e., the algebra
generated over $\mathbb R$ by $e\in V$ and the commutation
relations $e\cdot e'+e'\cdot e=-2\langle e, e'\rangle$ for $e, e'\in V$.
Let $c(e), \widehat{c}(e)$ be the Clifford
operators acting on $\Lambda V^{\ast}$ defined by
\begin{align} \label{2.5}c(e)=e^{\ast}\wedge-i_{e}, \
\widehat{c}(e)=e^{\ast}\wedge+i_{e}, \end{align} where
$e^{\ast}\wedge$ and $i_{e}$ are the standard notations for exterior
and interior multiplication and $e^{\ast}$ denotes the dual of $e$
by the Euclidean scalar product on $V$. Then $\Lambda V^{\ast}$ is
a Clifford module.

If $X, Y\in V$, then
\begin{align}\label{2.6}
c(X)c(Y)+c(Y)c(X)&=-2\langle X, Y\rangle, \nonumber\\
\widehat{c}(X)\widehat{c}(Y)+\widehat{c}(Y)\widehat{c}(X)&=2\langle X, Y\rangle,\\
c(X)\widehat{c}(Y)+\widehat{c}(Y)c(X)&=0.  \nonumber \end{align}
Let $d$ be the exterior differential derivative acting on the
smooth sections of $\Lambda V^{\ast}$, and let $\delta$ be the formal
adjoint of $d$ with respect to the Euclidean product (\ref{2.4}).
Denote by $v$ the gradient of $f$ with respect to the given
Euclidean scalar product, then
\begin{align}\label{2.7}
v(Z)=-\sum_{\alpha=1}^{j}Z_{\alpha}e_{\alpha}
+\sum_{\alpha=j+1}^{n}Z_{\alpha}e_{\alpha}.
\end{align}
Set
\begin{align}\begin{split}\label{2.7a}
d_{T}=e^{-Tf}d \cdot e^{Tf}&=d+T\ df\wedge, \ \ \delta_{T}=e^{Tf}\delta\cdot e^{-Tf}
=\delta+T \ i_{v},
\\  D_{T, v}&=d_{T}+\delta_{T}=d+\delta+T\widehat{c}(v).
\end{split}\end{align} The Witten Laplacian on Euclidean space $V$
is by definition, the square of $D_{T, v}$.
Let $\Delta$ be the standard Laplacian on $V$, i.e.,
\begin{align}\label{2.8}
\Delta=-\sum^{n}_{\alpha=1}\frac{\partial^{2}}{\partial
Z^{2}_{\alpha}}. \end{align}
Let $e^{1},\ldots, e^{n}$
be the dual basis associated to $e_{1},\ldots, e_{n}$.
Then we have the following result \cite{Witten82},\ \cite[Prop.\,4.9]{Zhang01}.

\begin{prop}\label{t2.1}  The kernel of $D_{T, v}^2$ is of one dimension and is
spanned by
\begin{align}\label{2.9}\beta=e^{-\frac{T{|Z|}^{2}}{2}}e^{1} \wedge\ldots\wedge
e^{j}.\end{align} Moreover, all nonzero eigenvalues of $D_{T, v}^{2}$
are $\geqslant 2T$. \end{prop}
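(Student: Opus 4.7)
The plan is to reduce $D_{T,v}^{2}$ to a sum of mutually commuting one-dimensional operators, each the sum of a quantum harmonic oscillator and a Clifford number operator. First I would expand $D_{T,v}^{2}=(d+\delta)^{2}+T\{d+\delta,\hat{c}(v)\}+T^{2}\hat{c}(v)^{2}$. On flat Euclidean space $(d+\delta)^{2}=\Delta$, while $\hat{c}(v)^{2}=|v|^{2}=|Z|^{2}$ by (\ref{2.6}). Writing $d+\delta=\sum_{\alpha}c(e_{\alpha})\partial_{\alpha}$, the anticommutation $c(e_{\alpha})\hat{c}(e_{\beta})+\hat{c}(e_{\beta})c(e_{\alpha})=0$ in (\ref{2.6}) kills the zeroth-order piece of the bracket, leaving $\{d+\delta,\hat{c}(v)\}=\sum_{\alpha}c(e_{\alpha})\hat{c}(\partial_{\alpha}v)$. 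For $v$ as in (\ref{2.7}) this equals $\sum_{\alpha}\epsilon_{\alpha}\,c(e_{\alpha})\hat{c}(e_{\alpha})$, with $\epsilon_{\alpha}=-1$ for $\alpha\leqslant j$ and $\epsilon_{\alpha}=+1$ for $\alpha>j$.

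Next, direct expansion of (\ref{2.5}) together with $i_{e_{\alpha}}(e^{\alpha}\wedge\omega)+e^{\alpha}\wedge i_{e_{\alpha}}\omega=\omega$ yields $c(e_{\alpha})\hat{c}(e_{\alpha})=2N_{\alpha}-1$, where $N_{\alpha}:=e^{\alpha}\wedge i_{e_{\alpha}}$ is the fermion number operator in the $\alpha$-direction. Substituting gives the separated form
\begin{equation*}
D_{T,v}^{2}=\sum_{\alpha=1}^{n}H_{\alpha},\qquad H_{\alpha}:=-\frac{\partial^{2}}{\partial Z_{\alpha}^{2}}+T^{2}Z_{\alpha}^{2}+T\epsilon_{\alpha}(2N_{\alpha}-1),
\end{equation*}
and the $H_{\alpha}$ pairwise commute because the oscillator and Clifford factors act on independent tensor slots.

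I would then diagonalize by separation of variables. On a monomial $\psi(Z)\,e^{I}$ with multi-index $I\subseteq\{1,\ldots,n\}$, $N_{\alpha}$ acts as the indicator $\mathbf{1}_{\alpha\in I}$, while the harmonic oscillator $-\partial_{\alpha}^{2}+T^{2}Z_{\alpha}^{2}$ has $L^{2}(\mathbb{R})$-spectrum $\{T(2k+1):k\geqslant 0\}$ with ground state proportional to $e^{-TZ_{\alpha}^{2}/2}$. The ground energy of $H_{\alpha}$ therefore equals $T\bigl(1+\epsilon_{\alpha}(2\mathbf{1}_{\alpha\in I}-1)\bigr)$, which is $0$ exactly when either $\alpha\leqslant j$ with $\alpha\in I$, or $\alpha>j$ with $\alpha\notin I$, and equals $2T$ in the other two cases. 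A zero eigenvalue of $D_{T,v}^{2}$ thus forces $I=\{1,\ldots,j\}$ together with every oscillator in its ground state, which isolates the one-dimensional eigenspace spanned by $\beta=e^{-T|Z|^{2}/2}e^{1}\wedge\cdots\wedge e^{j}$. Any other $L^{2}$-eigenform either excites a mode (adding a multiple of $2T$) or chooses a different $I$ (contributing at least $2T$ from some summand), which gives the gap $\lambda\geqslant 2T$.

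The only delicate step is the first one: keeping signs straight when combining $\{d+\delta,\hat{c}(v)\}$, the relations (\ref{2.6}), and the explicit form of $v$ from (\ref{2.7}). Once the separated expression for $D_{T,v}^{2}$ is obtained, the conclusion is a direct corollary of the elementary one-dimensional harmonic-oscillator spectrum and the two-valued spectrum of $N_{\alpha}$.
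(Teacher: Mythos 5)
Your proposal is correct and follows essentially the same route as the paper: both compute $D_{T,v}^{2}=\Delta+T^{2}|Z|^{2}+T\sum_{\alpha}\epsilon_{\alpha}c(e_{\alpha})\hat{c}(e_{\alpha})$ and then read off the kernel and the gap $2T$ from the harmonic-oscillator spectrum combined with the fermion number operator. Your per-coordinate splitting into commuting operators $H_{\alpha}$ is just a finer-grained version of the paper's grouping into $\mathcal{L}_{T}=\Delta+T^{2}|Z|^{2}-Tn$ plus $2T\bigl(\sum_{\alpha\leqslant j}i_{e_{\alpha}}e^{\alpha}\wedge+\sum_{\alpha>j}e^{\alpha}\wedge i_{e_{\alpha}}\bigr)$, so no genuinely different idea is involved.
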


\begin{proof} We recall the proof for the reader's convenience.

For $e\in V$, let $\nabla_{e}$ denote the derivative along the vector $e$.
It is easy to calculate the square of $D_{T, v}$,
\begin{align}\begin{split}\label{2.10}
D_{T, v}^{2}&=\Delta+T^2{|Z|}^{2}+
T\sum_{\alpha=1}^{n}c(e_{\alpha})\widehat{c}(\nabla_{e_{\alpha}}v)
\\&=(\Delta+T^2{|Z|}^{2}-Tn)+T\sum_{\alpha=1}^{j}\big[1-c(e_{\alpha})
\widehat{c}(e_{\alpha})\big]
+T\sum_{\alpha=j+1}^{n}\big[1+c(e_{\alpha})\widehat{c}(e_{\alpha})\big]
\\&=(\Delta+T^2{|Z|}^{2}-Tn)+2T\big(\sum_{\alpha=1}^{j}i_{e_{\alpha}}
{e^{\alpha}}\wedge +\sum_{\alpha=j+1}^{n}{e^{\alpha}}\wedge
i_{e_{\alpha}}\big).\end{split}\end{align}
The operator
\begin{align}\label{2.11}\mathcal{L}_{T}=\Delta+T^2{|Z|}^{2}-Tn
\end{align} is the harmonic oscillator operator on $V$.
By \cite[Appendix E]{Ma07}, we know
that $\mathcal{L}_{T}$ is positive elliptic operator with
one dimensional kernel generated by $e^{-\frac{T{|Z|}^2}{2}}$.
Moreover, the nonzero eigenvalues of $\mathcal{L}_{T}$ are all
greater than $2T$.  It is also easy to verify that the linear
operator
\begin{align}\label{2.12}
\sum_{\alpha=1}^{j}i_{e_{\alpha}}{e^{\alpha}}\wedge
+\sum_{\alpha=j+1}^{n}{e^{\alpha}}\wedge
i_{e_{\alpha}}\end{align} is positive with
one dimensional kernel generated by
\begin{align}\label{2.13}
e^{1}\wedge\ldots\wedge e^{j}.
\end{align}
The proof of Proposition \ref{t2.1} is complete. \end{proof}

\subsection{The Witten Laplacian on $\mathbb{R}_{+}^{n}$}

Let $V_{1}$ be an $n-1$ dimensional real vector space endowed with an
Euclidean scalar product.  Let $V_{1}^{-}, V_{1}^{+}$ be two subspaces such
that $V_{1}=V_{1}^{-} \oplus V_{1}^{+}$ and $\textrm{dim} \ V_{1}^{-}=j$.
Let $e_{1}, \ldots, e_{n-1}$ denote an orthonormal basis on $V_{1}$ such that
$V_{1}^{-}$ is spanned by $e_{1}, \ldots, e_{j}$.
Set $V_{2}=V_{1}\times \mathbb{R}_{+}$. Denote by $e_{n}$ the oriented basis of
$\mathbb{R}_{+}$ with unit length.

Let $f$ be a smooth function on $V_{2}$ given by:
\begin{align}\label{3.13}
f(Z)=f(0)-\frac{{\left\vert Z^{-}\right\vert}^2}{2}+
\frac{{\left\vert Z^{+}\right\vert}^2}{2}+Z_{n}, \end{align} where
$Z^{-}=(Z_{1}, \ldots, Z_{j}), Z^{+}=(Z_{j+1}, \ldots,
Z_{n-1}), (Z^{-}, Z^{+})$ denotes the coordinate functions on $V_{1}$
corresponding to the decomposition $V_{1}=V_{1}^{-}\oplus V_{1}^{+}$ and $Z_{n}\geqslant 0$
denotes the coordinate function on $\mathbb{R}_{+}$. Set $Z'=(Z^{-},Z^{+})$.

Let $L^{2}(\Lambda V_{2}^{\ast})$ be the set of the square integrable sections of $\Lambda V^{\ast}_{2}$.
We define an inner product in $L^{2}(\Lambda V_{2}^{\ast})$ similarly to (\ref{2.4}).
Let $C(V_{2})$ be the Clifford algebra of $V_{2}$. We still denote by $c(e), \hat{c}(e)$ the
Clifford operators on $\Lambda V^{\ast}_{2}$ for $e\in V_{2}$.

The gradient vector field $v_{1}$ of $f$ with respect to the given
Euclidean scalar product is now given as
\begin{align}
v_{1}(Z)=-\sum_{\alpha=1}^{j}Z_{\alpha}e_{\alpha}
+\sum_{\alpha=j+1}^{n-1}Z_{\alpha}e_{\alpha}+e_{n}.
\end{align}

Set
\begin{align}\begin{split}\label{3.15}
d_{T}=e^{-Tf}d \cdot e^{Tf}&=d+T\ df\wedge, \ \ \delta_{T}=e^{Tf}\delta\cdot e^{-Tf}
=\delta+T \ i_{v_{1}},
\\  D_{T, v_{1}}&=d_{T}+\delta_{T}=d+\delta+T\widehat{c}(v_{1}).
\end{split}\end{align}
To calculate explicitly the kernel of $D_{T, v_{1}}^{2}$ and,
more generally, to study its spectrum, we need to consider boundary conditions on
$\Lambda V_{2}^{\ast}$, i.e., to specify the domain of a self-adjoint extension of
$D_{T, v_{1}}^{2}$. We follow here \cite[\S\,3.5]{Ma07}. Denote by $\Omega(V_{2})$ the space of
smooth sections of $\Lambda V_{2}^{\ast}$ on $V_{2}$.
Define the domain of the weak maximal extension of $d_T$ by
\begin{equation}\label{1.11}
\Dom(d_T)=\big\{w\in L^2(\Lambda V_{2}^{\ast}):d_Tw\in L^2(\Lambda V_{2}^{\ast})\big\}
\end{equation}
where $d_Tw$ is calculated in the sense of distributions. We denote by $d_T^*$ the Hilbert
space adjoint of $d_T$. Let $e^n$ be the dual basis associated to $e_n$. Every $w\in \Omega(V_{2})$
has a natural decomposition into the norm and the tangent components along $V_{1}$,
\begin{equation}
w=w_{\textup{tan}}+w_{\textup{norm}},
\end{equation}
where $w_{\textup{tan}}$ does not contain the factor $e^{n}$.
Integration by parts shows
\begin{equation}
\begin{split}
\Dom&(d_T^*)\cap\Omega(V_{2})=\big\{w\in \Omega(V_{2}):w_{\textup{norm}}=0\big\}\,,\\
&d_T^*w=\delta_Tw\:\:\text{for $w\in\Dom(d_T^*)\cap\Omega(V_{2})$}\,.
\end{split}
\end{equation}
%Define the domains of $d_{T}$ and $\delta_{T}$:
%\begin{align}\label{2.17}
% \textup{Dom}(d_{T})=\Omega(V_{2}),\  \textup{Dom}(\delta_{T})=\big\{w\in \Omega(V_{2}), w_{\textup{norm}}=0\big\}.
%\end{align}
We define then the Gaffney extension of $D^{2}_{T, v_{1}}$ by
\begin{equation}\label{1.22}
\begin{split}
\Dom(D^{2}_{T, v_{1}})&=\big\{w\in\Dom(d_T)\cap\Dom(d_T^*):d_Tw\in\Dom(d_T^*)\,,\, d_T^*w\in\Dom(d_T)\big\}\,,\\
&D^{2}_{T, v_{1}}w=d_Td_T^*w+d_T^*d_Tw\:\:\text{for $w\in\Dom(D^{2}_{T, v_{1}})$}\,.
\end{split}
\end{equation}
This is a self-adjoint operator, see \cite[Prop.\,3.1.2]{Ma07}.
The smooth forms in the domain of $D^{2}_{T, v_{1}}$ satisfy the following boundary conditions
\begin{align}
\textup{Dom}(D_{T, v_{1}}^{2})\cap\Omega(V_{2})=\Big\{ w\in \Omega(V_{2}),
\begin{split} w_{\textup{norm}}&=0 \ \
\\ \big(d_{T} w\big)_{\textup{norm}}&
=0 \end{split}
\ \ \textup{on}\ V_{1}\Big\}.
\end{align}
If we rewrite $w\in \Omega(V_{2})$ as
\begin{align}
w(Z', Z_{n})=w_{1}(Z', Z_{n})+e^{n}\wedge w_{2}(Z', Z_{n}), \ Z' \in V_{1}, Z_{n}\in \mathbb{R}_{+},
\end{align}
\noindent where $w_{1}$ does not contain the factor $e^{n}$, then we have
\begin{lemma}
The boundary conditions
\begin{align}
w_{\textup{norm}}=0, \ (d_{T}w)_{\textup{norm}}=0\ \textup{on}\ V_{1}
\end{align}
are equivalent to
\begin{align}\label{1.22a}
\frac{\partial w_{1}}{\partial Z_{n}}(Z', 0)+Tw_{1}(Z', 0)=0 \ \textup{and}\ w_{2}(Z', 0)=0.
\end{align}
\end{lemma}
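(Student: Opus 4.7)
The plan is purely computational: unpack the two boundary conditions in coordinates using the decomposition $w = w_1 + e^n \wedge w_2$ and compare with the algebraic formula for $d_T w$ at $Z_n = 0$.

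First I would observe that the decomposition $w = w_{\textup{tan}} + w_{\textup{norm}}$ is tautologically $w_{\textup{tan}} = w_1$ and $w_{\textup{norm}} = e^n \wedge w_2$, so the condition $w_{\textup{norm}} = 0$ on $V_1$ (i.e.\ at $Z_n = 0$) is immediately equivalent to $w_2(Z',0) = 0$. This handles one of the two equivalences for free.

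Next I would compute $d_T w = dw + T\, df \wedge w$ explicitly with respect to the splitting. Write $d = d' + e^n \wedge \partial/\partial Z_n$, where $d' = \sum_{\alpha=1}^{n-1} e^\alpha \wedge \partial/\partial Z_\alpha$, and split $df = df' + e^n$ with $df' = -\sum_{\alpha=1}^{j} Z_\alpha e^\alpha + \sum_{\alpha=j+1}^{n-1} Z_\alpha e^\alpha$. Using $e^\alpha \wedge e^n = -e^n \wedge e^\alpha$ for $\alpha < n$ and $e^n \wedge e^n = 0$, a direct expansion gives
\begin{equation*}
d_T w = \bigl(d' w_1 + T\, df' \wedge w_1\bigr) + e^n \wedge \bigl(\partial_{Z_n} w_1 + T\, w_1 - d' w_2 - T\, df' \wedge w_2\bigr),
\end{equation*}
so that
\begin{equation*}
(d_T w)_{\textup{norm}} = e^n \wedge \bigl(\partial_{Z_n} w_1 + T\, w_1 - d' w_2 - T\, df' \wedge w_2\bigr).
\end{equation*}

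Finally I would evaluate at $Z_n = 0$ under the already-established condition $w_2(Z',0) = 0$. Since $d'$ and $df' \wedge$ involve only derivatives and wedgings along $V_1$, the identity $w_2(Z',0) = 0$ forces $d' w_2(Z',0) = 0$ and $df' \wedge w_2(Z',0) = 0$. Therefore $(d_T w)_{\textup{norm}}\big|_{V_1} = 0$ reduces to $\partial_{Z_n} w_1(Z',0) + T\, w_1(Z',0) = 0$, giving (\ref{1.22a}). The converse goes the same way: assuming (\ref{1.22a}), $w_2(Z',0) = 0$ yields $w_{\textup{norm}}|_{V_1} = 0$, and substitution into the formula above yields $(d_T w)_{\textup{norm}}|_{V_1} = 0$.

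The only nontrivial obstacle is bookkeeping of signs and wedge orderings in the expansion of $d_T w$; once the splitting $d = d' + e^n \wedge \partial_{Z_n}$ and $df = df' + e^n$ is fixed, everything else is a routine verification, and no analytic content beyond restriction to the hyperplane $\{Z_n = 0\}$ is required.
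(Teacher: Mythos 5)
Your computation is correct: the identification $w_{\textup{norm}}=e^n\wedge w_2$, the expansion $d_Tw=(d'w_1+T\,df'\wedge w_1)+e^n\wedge(\partial_{Z_n}w_1+Tw_1-d'w_2-T\,df'\wedge w_2)$, and the observation that $w_2(Z',0)=0$ kills the tangential operators $d'$ and $df'\wedge$ on the boundary together give exactly the equivalence \eqref{1.22a}. The paper declares this proof ``straightforward and left to the reader,'' and your argument is precisely that intended routine verification, so there is nothing to compare beyond noting agreement.
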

\begin{proof}
The proof is straightforward and is left to the reader.
\end{proof}
Let $e^{1},\ldots, e^{n-1}$
be the dual basis of $e_{1},\ldots, e_{n-1}$.
We denote by $\Delta' \ (\textup{resp}. \ \Delta)$ the
standard Laplacian on $V_{1} \ (\textup{resp}. \ V_{2})$,  i.e.,
\begin{align}
\Delta'=-\sum^{n-1}_{\alpha=1}\frac{\partial^{2}}{\partial
Z^{2}_{\alpha}} \ \ \  \Big(\textup{resp}. \ \Delta=-\sum^{n}_{\alpha=1}\frac{\partial^{2}}{\partial
Z^{2}_{\alpha}}\Big). \end{align}
\begin{prop} \label{t2.3}
The kernel of $D_{T, v_{1}}^2$, with the domain given as in \eqref{1.22}, is
one dimensional and is spanned by
\begin{align}
\beta_{T, v_{1}}=e^{-\frac{T}{2}|Z'|^{2}-TZ_{n}}
e^{1} \wedge\ldots\wedge e^{j}.\end{align}
Moreover, all nonzero eigenvalues of $D_{T, v_{1}}^{2}$
are $\geqslant 2T$.
\end{prop}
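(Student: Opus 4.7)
The plan is to extend Proposition \ref{t2.1}'s strategy to the product geometry $V_2 = V_1 \times \mathbb{R}_+$, exploiting the fact that $f(Z) = f_1(Z') + Z_n$ splits and $\nabla_{e_n} v_1 = 0$. Repeating the calculation that led to (\ref{2.10}) gives
\begin{equation*}
D_{T,v_1}^{2} = A + B, \qquad A := \Delta' + T^2|Z'|^2 - T(n-1) + 2T\Theta, \qquad B := -\partial^2_{Z_n} + T^2,
\end{equation*}
where $\Theta := \sum_{\alpha=1}^{j} i_{e_\alpha} e^\alpha\wedge + \sum_{\alpha=j+1}^{n-1} e^\alpha\wedge i_{e_\alpha}$ is the analogue on $V_1$ of the form operator (\ref{2.12}). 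A short Clifford computation shows $\Theta$ commutes with $e^n\wedge$, so $A$ preserves the decomposition $\Lambda V_2^* = \Lambda V_1^* \oplus e^n\wedge \Lambda V_1^*$ and commutes with $B$, which acts only in the $Z_n$ variable.

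Next I would use the preceding lemma to separate the boundary condition (\ref{1.22a}) into two one-dimensional problems for $B$ along the form decomposition: the Robin condition $\partial_{Z_n} w_1(\cdot,0) + T w_1(\cdot,0) = 0$ on the $\Lambda V_1^*$-summand, and the Dirichlet condition $w_2(\cdot,0) = 0$ on the $e^n \wedge \Lambda V_1^*$-summand. Elementary ODE analysis on $L^2(\mathbb{R}_+)$ then shows that $B$ with Robin BC has a simple eigenvalue $0$ realized by $e^{-TZ_n}$ (and otherwise absolutely continuous spectrum $[T^2,\infty)$), while $B$ with Dirichlet BC has no point spectrum at all. Meanwhile, $A$ is handled essentially by Proposition \ref{t2.1} applied on the $(n{-}1)$-dimensional space $V_1$: it is non-negative, with one-dimensional kernel spanned by $e^{-T|Z'|^2/2} \cdot e^1\wedge\cdots\wedge e^j$ and all other eigenvalues at least $2T$.

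Finally I would assemble the pieces via the joint spectrum of the commuting pair $(A, B)$. On the $\Lambda V_1^*$-summand, the kernel of $D_{T,v_1}^2$ is the tensor product of the three ground states, one-dimensional and spanned by $\beta_{T,v_1}$; any further point eigenvalue picks up a contribution of at least $2T$ from $A$, since $B$'s point spectrum here is $\{0\}$. On the $e^n \wedge \Lambda V_1^*$-summand, $B$ contributes no point spectrum, so $D_{T,v_1}^2$ has no eigenvalue there. The direct verifications that $\beta_{T,v_1}$ satisfies the Robin condition (since $(\partial_{Z_n} + T)e^{-TZ_n} = 0$) and that its Gaussian decay places it in $\Dom(D_{T,v_1}^2)$ then complete the proof. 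The main delicate point I anticipate is verifying cleanly that the Gaffney extension (\ref{1.22}) decouples along the $e^n$-grading into the Robin/Dirichlet pair, but this is precisely the content of the preceding lemma, so the remaining spectral analysis is routine.
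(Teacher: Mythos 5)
Your proposal is correct and follows essentially the same route as the paper's proof: the identical computation yielding the commuting decomposition of $D_{T,v_1}^{2}$ into the harmonic oscillator on $V_{1}$, the algebraic form operator, and $-\partial_{Z_n}^{2}+T^{2}$, the same use of the preceding lemma to decouple the Gaffney boundary conditions into the Robin/Dirichlet pair along the $e^{n}$-grading, and the same separation-of-variables/ODE analysis giving the kernel $e^{-\frac{T}{2}|Z'|^{2}-TZ_{n}}e^{1}\wedge\cdots\wedge e^{j}$ and the $2T$ lower bound. Your explicit handling of the $e^{n}\wedge\Lambda V_{1}^{\ast}$ summand through the Dirichlet half-line realization (no point spectrum) is a slightly more careful rendering of a step the paper treats tersely, but it is not a genuinely different method.
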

%--------------------------------
\begin{proof} We adapt the proof from \cite[Prop.\,4.9]{Zhang01}.
We denote by $\nabla_{e}$ the derivative
along a vector $e\in V_{2}$. It is easy to calculate the square of $D_{T, v_{1}}$:
\begin{align}\label{1.25}
D_{T, v_{1}}^{2}=&\Delta+T^2\Big({|Z'|}^{2}+1\Big)+
T\sum_{\alpha=1}^{n}c(e_{\alpha})\widehat{c}\big(\nabla_{e_{\alpha}}v_{1}\big)
\nonumber \\=&
\Big(\Delta'+T^2{|Z'|}^{2}-T(n-1)\Big)+\Big(-\frac{\partial^{2}}{\partial Z_{n}^{2}}+T^{2}\Big)
\nonumber \\&
+T\sum_{\alpha=1}^{j}\big[1-c(e_{\alpha})
\widehat{c}(e_{\alpha})\big]
+T\sum_{\alpha=j+1}^{n-1}\big[1+c(e_{\alpha})\widehat{c}(e_{\alpha})\big]
\\=&
\Big(\Delta'+T^2{|Z'|}^{2}-T(n-1)\Big)+\Big(-\frac{\partial^{2}}{\partial Z_{n}^{2}}+T^{2}\Big)
\nonumber \\&
+2T\big(\sum_{\alpha=1}^{j}i_{e_{\alpha}}
{e^{\alpha}}\wedge +\sum_{\alpha=j+1}^{n-1}{e^{\alpha}}\wedge
i_{e_{\alpha}}\big).\nonumber
\end{align}

Since the three operators in parentheses on the right side of the third equality
in (\ref{1.25}) commute with each other, we can calculate the kernels of
$D^{2}_{T, v_{1}}$ via the method of separating variables.
As in Proposition \ref{t2.1}, the kernel of the operator
\begin{align}
\mathcal{L}'_{T}=\Delta'+T^2{|Z'|}^{2}-T(n-1)\end{align}
is one dimensional and generated by $e^{-\frac{T{|Z'|}^2}{2}}$.
Besides, its nonzero eigenvalues are all
greater than $2T$.  It is easy to verify that the linear
operator
\begin{align}\label{lin}
\sum_{\alpha=1}^{j}i_{e_{\alpha}}{e^{\alpha}}\wedge
+\sum_{\alpha=j+1}^{n-1}{e^{\alpha}}\wedge
i_{e_{\alpha}}\end{align}
restricted to $\textup{Dom}(D_{T, v_{1}}^{2})$ is positive and has one
dimensional kernel generated by
\begin{align}\label{1.28}
e^{1}\wedge\ldots\wedge e^{j}.
\end{align}
Therefore, the elements of the kernel of $D_{T, v_{1}}^{2}$ take
the form
\begin{align}
g(Z_{n})e^{-\frac{T{|Z'|}^2}{2}}e^{1}\wedge\ldots\wedge e^{j},
\end{align} where $g(Z_{n})$ is a smooth function in $Z_{n}$.
Combining (\ref{1.22a}) and (\ref{1.25}), we find that the smooth function $g(Z_{n})$ satisfies
the following conditions:
\begin{align}\label{2.25}
g'(0)+Tg(0)=0, \ \ \ -g''(Z_{n})+T^{2}g(Z_{n})=0.
\end{align}
Thus, $g(Z_{n})=e^{-TZ_{n}}$ up to multiplicative constant.
Set
\begin{align}
R_{T}=-\frac{\partial^{2}}{\partial Z_{n}^{2}}+T^{2}
\end{align} with domain given by
\begin{align}
\textup{Dom}(R_{T})=\big\{h\in C^{\infty}_{0}(\mathbb{R}_{+}), \ h'(0)+Th(0)=0 \big\}.
\end{align}
One verifies directly via integration by parts that $R_{T}$ is positive on $\Dom(R_{T})$,
so $R_{T}$ has a self-adjoint positive extension (the Friedrichs extension) still denoted by $R_{T}$.
Denote by $\lambda_{1}(P)$ the first nonzero eigenvalues of a positive operator $P$.
By min-max principle \cite[(C.3.3)]{Ma07} and the fact that $R_T$ and \eqref{lin} are positive operators, we deduce that
\begin{align}
\lambda_{1}(D^{2}_{T, v_{1}})\geqslant \lambda_{1}(\mathcal{L}'_{T})= 2T.
\end{align}
The proof of Proposition \ref{t2.3} is complete. \end{proof}

\section{The Witten instanton complex}\label{s2}
Let $g^{TM}$ be a metric on $TM$ such that if $p\in C^{j}(f)\cup C^{j}_{-}(f|_{\partial M})$,
in the coordinates $x=(x_{1}, \ldots, x_{n})$ in the conditions (2) and (3)
of Section \ref{s5},
\begin{align}
g^{TM}=\sum_{\alpha=1}^{n}dx^{2}_{\alpha} \ \  \textup{on}\ U_{p}.
\end{align}
Let $\nabla^{TM}$ be the Levi-Civita
connection associated to the metic $g^{TM}$,
and let $dv_{M}$ be the density (or Riemannian volume form) on $M$, i.e., $dv_{M}$ is a smooth
section of the line bundle $\Lambda^{n}(T^{\ast}M)\otimes o(TM)$ (cf. \cite[p.\,29]{Berline04},
\cite[p.\,88]{Bott82}). Denote by $\Omega^{i}(M)$ the space of smooth differential $i$-forms on $M$.
Set $\Omega(M)=\oplus^{n}_{i=0}\Omega^{i}(M)$.
We denote by $L^{2}\Omega(M)$ the space of square integrable sections of $\Lambda(T^{\ast}M)$
over $M$. For $w_{1}, w_{2}\in L^{2}\Omega(M)$, set
\begin{align}\label{1.2}
\big\langle w_{1}, w_{2}\big\rangle=\int_{M}\big\langle w_{1}, w_{2} \big\rangle(x) dv_{M}(x).
\end{align} We denote by $\|\cdot\|$ the norm on $L^{2}\Omega(M)$
induced by (\ref{1.2}).

Let $d$ be the exterior differential derivative
on $\Omega(M)$, and let $\delta$ be the formal adjoint of $d$ with
respect to the metric (\ref{1.2}).  Set
\begin{align}\label{1.4}
d_{T}=e^{-Tf}d\cdot e^{Tf}, \ \
\delta_{T}=e^{Tf}\delta\cdot e^{-Tf}.
\end{align}
The deformed de Rham operator $D_{T}$ is given by
\begin{align}\label{1.4b}
D_{T}=d_{T}+\delta_{T}.
\end{align}
The Witten Laplacian on manifolds is defined by
\begin{align}\label{1.4a}
D_{T}^{2}=(d_{T}+\delta_{T})^{2}=d_{T}\delta_{T}+\delta_{T}d_{T}.
\end{align}
Define the domain of the weak maximal extension of $d_T$ by
\begin{equation}
\Dom(d_T)=\big\{w\in L^2\Omega(M),d_Tw\in L^2\Omega(M)\big\}
\end{equation}
where $d_Tw$ is calculated in the sense of distributions.
We denote by $d_T^*$ the Hilbert space adjoint of $d_T$.
Every smooth differential form $w$
has a natural decomposition into the norm and the tangent components along $\partial M$,
\begin{align}\label{1.5}
w=w_{\textup{tan}}+w_{\textup{norm}},
\end{align} where $w_{\textup{tan}}$ does not contain the factor ${\nu}$.
Integration by parts shows
\begin{equation}\label{1.6}
\begin{split}
\Dom&(d_T^*)\cap\Omega(M)=\big\{w\in \Omega(M):w_{\textup{norm}}=0\big\}\,,\\
&d_T^*w=\delta_Tw\:\:\text{for $w\in\Dom(d_T^*)\cap\Omega(M)$}\,.
\end{split}
\end{equation}
By \eqref{1.6}, the domain of the extension $D_T=d_T+d_T^*$ of deformed de Rham  operator is:
\begin{align}\label{1.7}
\textup{Dom}(D_{T})\cap\Omega(M)=\big\{w \in \Omega(M), w_{\textup{norm}}=0 \ \ \textup{on} \ \partial M\big\}.
\end{align}
We define the self-adjoint extension of $D_{T}^{2}$ as in \eqref{1.22} by $D_{T}^{2}=d_Td_T^*+d_T^*d_T$.
Then
\begin{align}\label{1.8}
\textup{Dom}(D^{2}_{T})\cap\Omega(M)=\Big\{ w\in \Omega(M),
\begin{split} w_{\textup{norm}}&=0 \ \
\\   \big(d_{T} w\big)_{\textup{norm}}&
=0  \end{split} \ \ \textup{on} \ \partial M  \Big\}.
\end{align}
%
%Consider the following domains of $d_{T}$ and $\delta_{T}$:
%\begin{align}\label{1.6}
%\textup{Dom}(d_{T})=\Omega(M),\ \textup{Dom}(\delta_{T})=\big\{w\in \Omega(M), w_{\textup{norm}
%}=0 \ \ \textup{on} \ \partial M \big\}.
%\end{align}
%If $w_{1}\in \textup{Dom}(d_{T})$ and $w_{2}\in \textup{Dom}(\delta_{T})$, then
%\begin{align}\label{1.6a}
%\big\langle d_{T}w_{1}, w_{2}\big\rangle =\big\langle w_{1}, \delta_{T} w_{2}\big\rangle.
%\end{align}
%
%If $w_{1}, w_{2}\in \textup{Dom}(D^{2}_{T})$, then
%\begin{align}
%\big\langle D^{2}_{T}w_{1}, w_{2}\big\rangle =
%\big\langle d_{T}w_{1}, d_{T}w_{2}\big\rangle+\big\langle \delta_{T}w_{1}, \delta_{T}w_{2}\big\rangle
%=\big\langle w_{1}, D_{T}^{2}w_{2}\big\rangle.
%\end{align}
Following the argument of \cite[Prop.\,5.5]{Zhang01}, one easily gets Morse inequalities (\ref{1.1})
granted the following Proposition holds.
 \begin{prop}\label{t1.2}
  For any $C_{0}>0$, there exists $T_{0}>0$ such that when $T\geqslant T_{0}$, the number
  of eigenvalues in $[0, C_{0})$ of $D^{2}_{T}\big|_{\textup{Dom}(D^{2}_{T})\cap \Omega^{j}(M)}$ equals
 $c_{j}+p_{j}$. \end{prop}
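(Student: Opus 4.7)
The plan is to deduce Proposition \ref{t1.2} directly from the two spectral bounds already recorded in Theorems \ref{t1.3} and \ref{t1.4}. The Witten Laplacian $D_T^2$ with absolute boundary conditions is a self-adjoint elliptic operator on a compact manifold with boundary, hence its spectrum on each $\Omega^j(M)$ is discrete, so the counting of eigenvalues in $[0,C_0)$ makes sense. Given $C_0>0$, what has to be shown is that for $T$ sufficiently large the first $c_j+p_j$ eigenvalues of $D_T^2|_{\Omega^j(M)}$ all fall below $C_0$, while the $(c_j+p_j+1)$-th does not.

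First I would apply Theorem \ref{t1.4}: for $T\geq T_2$ and every $\ell\leq c_j+p_j$ one has $\lambda_\ell^j(T)\leq a_2 e^{-a_3 T}$. Since $a_2 e^{-a_3 T}\to 0$ as $T\to\infty$, there is a threshold $T_0'\geq T_2$ beyond which $a_2 e^{-a_3 T}<C_0$; for $T\geq T_0'$ the first $c_j+p_j$ eigenvalues therefore lie in $[0,C_0)$. Next I would invoke Theorem \ref{t1.3}: for $T>T_1$ and every $\ell\geq c_j+p_j+1$ one has $\lambda_\ell^j(T)\geq a_1 T^2$. Since $a_1T^2\to\infty$, choose $T_0''\geq T_1$ such that $a_1T^2\geq C_0$ whenever $T\geq T_0''$; then none of these later eigenvalues can lie in $[0,C_0)$.

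Setting $T_0=\max\{T_0',T_0''\}$, for $T\geq T_0$ precisely the eigenvalues of index $\ell=1,\ldots,c_j+p_j$ (counted with multiplicity) belong to $[0,C_0)$, which is the claim.

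There is no genuine analytic obstacle at the level of this proposition: the spectral gap is already furnished in one direction by the exponential upper bound of Theorem \ref{t1.4} (established later via Bismut–Lebeau localization together with the model computations of Propositions \ref{t2.1} and \ref{t2.3} on the half-space), and in the other direction by the quadratic lower bound of Theorem \ref{t1.3} (obtained by localization and min–max as in \cite{Chang95}). Once both are in hand Proposition \ref{t1.2} reduces to a book-keeping statement about where the eigenvalues sit, so the real work lies upstream, in proving the two theorems rather than in combining them.
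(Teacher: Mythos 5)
Your argument is correct and not circular: in the paper Theorems \ref{t1.3} and \ref{t1.4} are proved in Section \ref{s4} by the min--max principle together with the decomposition estimates of Proposition \ref{t6.1}, without any appeal to Proposition \ref{t1.2}, so you may combine them afterwards exactly as you do; since the absolute (Gaffney) realization of $D_T^2$ on a compact manifold with boundary has discrete spectrum, choosing $T_0$ so that $a_2e^{-a_3T}<C_0\leqslant a_1T^2$ pins down exactly $c_j+p_j$ eigenvalues (counted with multiplicity) in $[0,C_0)$. This is, however, a genuinely different route from the paper's own proof: there Proposition \ref{t1.2} is deduced from Lemma \ref{t7.1}, which shows that $e_T=P^{C_0}_TJ_T$ differs from the almost-isometry $J_T$ by $O(e^{-cT})$ and hence is injective, giving $\dim F^{C_0}_{T,j}\geqslant c_j+p_j$, while the reverse inequality is obtained by carrying over the projection/resolvent comparison of Zhang, pp.~86--88. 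Your route is the more economical one given that Theorems \ref{t1.3} and \ref{t1.4} are already in place (indeed the introduction itself remarks that the proposition follows from them); what the paper's route buys is the map $e_T$ and the information that the small-eigenvalue space $F^{C_0}_{T,j}$ is exponentially close to the model space $E^j_T$, which is needed anyway in \eqref{7.7} for the proof of Theorem \ref{t1.0}, so the eigenvalue count there comes as a by-product of machinery the author must develop regardless. The only point you should make explicit is the discreteness of the spectrum (equivalently, that the min--max values in \eqref{5.15} are genuine eigenvalues), which the paper tacitly assumes when it lists $0\leqslant\lambda_1^j(T)\leqslant\lambda_2^j(T)\leqslant\cdots$.
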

 We postpone the proof of Proposition \ref{t1.2} to Section \ref{s7}. We prove now Corollary \ref{t1.1} by
 using Proposition \ref{t1.2}.

\begin{proof}[Proof of Corollary \ref{t1.1}]
Let $F^{C_{0}}_{T, j}$ denote the
$(c_{j}+p_{j})$-dimensional vector space generated by the eigenspaces of
$D^{2}_{T}|_{\textup{Dom}(D^{2}_{T})\cap \Omega^{j}(M)}$ associated to the eigenvalues lying in [0, $C_{0}$).
Since $d_{T}D^{2}_{T}=D^{2}_{T}d_{T}$, one verifies directly that $d_{T}(F^{C_{0}}_{T,j})\subset F_{T,j+1}^{C_{0}}$.
Then we have the following complex:
\begin{align}\label{1.8a}
(F^{C_{0}}_{T,\bullet}, d_{T}):0\longrightarrow F^{C_{0}}_{T, 0}
\longrightarrow F^{C_{0}}_{T, 1} \longrightarrow\ldots \rightarrow
F^{C_{0}}_{T, n}\longrightarrow  0.
\end{align}
By Hodge Theorem in the finite dimensional case, the $j$-th
cohomology group of the above complex is isomorphic to
$\textup{Ker} \big(D^{2}_{T}|_{\textup{Dom}(D^{2}_{T})\cap \Omega^{j}(M)}\big)$,
which is again by Hodge Theorem isomorphic to
the $j$-th cohomology group of the deformed de Rham
complex $(\Omega^{\bullet}(M), d_{T})$. It is a consequence of (\ref{1.4}) that
the $j$-th cohomology group of the deformed de Rham
complex $(\Omega^{\bullet}(M)$, $d_{T})$ is isomorphic to
the $j$-th cohomology group of the de Rham complex $(\Omega^{\bullet}(M), d)$.
Then the inequalities (\ref{1.1}) follow from standard algebraic techniques
(\cite[Lemma 3.2.12]{Ma07}).
 \end{proof}

 \begin{rem} We can also obtain the inequalities (\ref{1.1}) immediately
by combining Theorem \ref{t1.0} and \cite[Th.\,A]{Laudenbach10}.
\end{rem}

\begin{proof}[Proof of Corollary \ref{t1.2a}] By Poincar\'e duality theorem for non-orientable manifolds,
\begin{align}\label{1.8b}
H_{dR}^{\bullet}(M, \mathbb{R})\simeq H^{n-\bullet}_{dR}(M, \partial M; o(TM)).
\end{align}
Then the inequalities (\ref{1.1a}) follow immediately from the inequalities (\ref{1.1}) and
the isomorphism (\ref{1.8b}).
 \end{proof}

\section{Proof of Proposition \ref{t1.2} and Theorem \ref{t1.0}}\label{s6}
The organization of the Section is as follows.
In Section \ref{s2a}, we obtain a basic estimate for the deformed de Rham operator
which allows to localize our problem
(i.e., study the eigenvectors with small eigenvalues of the Witten Laplacian) to some
neighborhood of critical points in $C(f)$ and $C_{-}(f|_{\partial M})$.
Section \ref{s3} is devoted to the local behavior of the Witten Laplacian around
critical points in $C(f)$ and $C_{-}(f|_{\partial M})$. In Section \ref{s4}, we get a decomposition
of the deformed de Rham operator and establish estimates of its components.
We also prove Theorem \ref{t1.3} and Theorem \ref{t1.4} there. Section \ref{s7}
is devoted to the proof of Proposition \ref{t1.2} and Theorem \ref{t1.0}.

\subsection{Localization of the lower part of the spectrum of the
Witten Laplacian}\label{s2a}
Choose $a$ small enough such that all $B(p, a)$'s  are disjoint for
$p\in C(f)\cup C_{-}(f|_{\partial M})$, and
each $B(p, a)$ lies in the interior of $M$ for $p\in C(f)$. Denote by $U$ the union of
all $B(p, a)$'s for $p\in C(f)\cup C_{-}(f|_{\partial M})$.
%The following statement is an analogue of \cite[Lemma 3.2.1]{Niluefer}
\begin{prop}\label{t4.2}
There exist constants $a_{4}>0, T_{3}>0$ such that for any
$s\in\Dom(D_{T})$ with $\supp(s)\subset M\backslash U$ and
$T\geqslant T_{3}$, we have
\begin{align}\label{2.0}
\big\|D_{T}s\big\|\geqslant a_{4}T\big\|s\big\|.
\end{align}
\end{prop}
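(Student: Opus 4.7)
The plan is to use the standard Witten-deformation squaring argument, adapted to handle the boundary contributions. First I would verify the geometric input that $|\nabla f|^2 \ge c_0 > 0$ on $M \setminus U$: the zeros of $\nabla f$ on $M$ lie entirely in $C(f) \subset U$, since $f$ has no critical points on $\partial M$; at each boundary critical point $p \in C_\pm^j(f|_{\partial M})$ only the tangential part of $\nabla f$ vanishes while $(\nu f)(p)\ne 0$, so $|\nabla f|$ remains bounded away from zero in a full neighborhood of those points; compactness then yields the uniform lower bound $c_0$.

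Writing $D_T = D + T\widehat{c}(\nabla f)$ with $D = d+\delta$, one has the Weitzenb\"ock-type identity
\[
D_T^2 = D^2 + T^2|\nabla f|^2 + T\mathcal{A},\qquad \mathcal{A} = \sum_\alpha c(e_\alpha)\widehat{c}(\nabla_{e_\alpha}\nabla f),
\]
where $\mathcal{A}$ is a bounded zeroth-order operator arising from the anticommutator $\{D,\widehat{c}(\nabla f)\}$. For $s \in \Dom(D_T^2)\cap\Omega(M)$ with $\supp(s) \subset M\setminus U$, self-adjointness of $D_T$ gives $\|D_T s\|^2 = \langle D_T^2 s,s\rangle$, and Green's formula applied to $\langle D^2 s,s\rangle$ under the boundary conditions $s_{\mathrm{norm}} = 0$ and $(d_T s)_{\mathrm{norm}} = 0$ (the second of which forces $i_\nu ds = -T(\nu f)s$ on $\partial M$) produces the key identity
\[
\|D_T s\|^2 = \|Ds\|^2 + T^2\!\int_M |\nabla f|^2|s|^2 + T\langle \mathcal{A}s,s\rangle + T\!\int_{\partial M}(\nu f)|s|^2.
\]
Using $|\nabla f|^2 \ge c_0$ on $\supp(s)$, boundedness of $\mathcal{A}$, and $\|Ds\|^2 \ge 0$, the problem is reduced to controlling the final boundary term.

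The absorption would use the trace inequality $\|s\|_{L^2(\partial M)}^2 \le C_{\mathrm{tr}}\|s\|_{L^2}\|s\|_{H^1(M)}$ together with Gaffney's inequality $\|s\|_{H^1(M)}^2 \le C_G(\|Ds\|^2 + \|s\|^2)$, which is available here because $s_{\mathrm{norm}} = 0$. Combined with an AM--GM step with parameter $\epsilon \sim 1/T$, this bounds $|T\!\int_{\partial M}(\nu f)|s|^2|$ by $\tfrac{1}{2}\|Ds\|^2$ plus lower-order terms of size $O(T\|s\|^2)$, so that for $T \ge T_3$ large enough one obtains $\|D_T s\|^2 \ge (c_0/2)T^2\|s\|^2$; taking square roots gives the estimate with $a_4 = \sqrt{c_0/2}$. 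The extension from smooth elements of $\Dom(D_T^2)$ to arbitrary $s \in \Dom(D_T)$ with $\supp(s) \subset M\setminus U$ is then a standard density/cutoff argument: approximate $s$ in the graph norm by $s_n \in \Dom(D_T^2)$ and multiply by a cutoff equal to $1$ on $\supp(s)$ and vanishing near $U$.

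The main obstacle is the mixed sign of the boundary term $T\!\int_{\partial M}(\nu f)|s|^2$ on $\partial M \cap \supp(s)$: it is positive (hence helpful) near the $C_+$ critical points of $f|_{\partial M}$, but can be negative on the rest of the boundary, since only neighborhoods of $C_-$ have been excised. Absorbing the negative part into the dominant $T^2 c_0\|s\|^2$ without destroying the $T^2$ gain is the essentially new analytic input compared with the closed case treated in \cite{Zhang01}, and it is what forces the combined use of the trace inequality and Gaffney's estimate with a $T$-dependent balancing parameter.
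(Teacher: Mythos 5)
Your reduction to the boundary term is the same starting point as the paper's (the Green/Bochner identity with the extra term $T\int_{\partial M}(\nu f)\lvert s_{\textup{tan}}\rvert^2$), but the absorption step is where the argument breaks down. The trace--Gaffney--AM-GM scheme only uses two pieces of information: $\lvert\nabla f\rvert^2\geqslant c_0$ on $\supp(s)$ and $\lvert\nu f\rvert$ bounded, together with global constants $C_{\mathrm{tr}},C_G$. Quantitatively it cannot deliver the claimed bound: to absorb the $\|s\|_{H^1}^2$ piece into the unit-coefficient term $\|ds\|^2+\|\delta s\|^2$ you are forced to take the balancing parameter of size $\epsilon\lesssim (C_GT)^{-1}$, and the complementary piece then costs at least $\mathrm{const}\cdot T^2\|s\|^2$ with $\mathrm{const}$ determined by $\sup_{\partial M}\lvert\nu f\rvert$, $C_{\mathrm{tr}}$ and $C_G$ --- a constant that has no relation to $c_0$ and need not be smaller than it. So the conclusion $\|D_Ts\|^2\geqslant (c_0/2)T^2\|s\|^2$ does not follow; your choice ``$\epsilon\sim 1/T$'' as written even produces a term of order $T\|Ds\|^2$, which cannot be absorbed at all. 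More structurally, your hypotheses are also satisfied by forms supported in a ball around a point $p\in C_-(f|_{\partial M})$ (there $\lvert\nabla f\rvert$ is bounded below and $\lvert\nu f\rvert$ is bounded), yet for the cut-off model kernel $\gamma\,\xi_T=\gamma\, e^{-\frac{T}{2}|x'|^2-Tx_n}e^1\wedge\cdots\wedge e^j$ one has $\|D_T(\gamma\xi_T)\|=O(e^{-cT})\|\gamma\xi_T\|$; hence no argument using only those ingredients can prove the estimate. The point you are missing is precisely why the balls around $C_-(f|_{\partial M})$ are excised: on $\supp(s)\cap\partial M$ one has the \emph{strict pointwise} inequality $-(\nu f)<(1-\eta)\lvert\nabla f\rvert$ for some $\eta>0$ (it fails exactly at $C_-$ points, where $-\nu f=\lvert\nabla f\rvert$), and any successful proof must exploit this comparison between $\nu f$ and $\lvert\nabla f\rvert$ rather than crude bounds on each separately.

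The paper (following Le Peutrec) handles the boundary term differently: it constructs an auxiliary function $\tilde f$ near $\partial M$ with $\lvert\nabla\tilde f\rvert=\lvert\nabla f\rvert$ on $\supp(s)$ and $\lvert\nabla\tilde f\rvert=-\nu\tilde f$ on $\supp(s)\cap\partial M$, and applies the same Green identity to $\tilde f$; since its left-hand side is nonnegative, this yields
\begin{align*}
T\int_{\partial M}\big\langle(\nu f)s_{\textup{tan}},s_{\textup{tan}}\big\rangle
\;\geqslant\;-(1-\eta)\Big[\|ds\|^2+\|\delta s\|^2+T^2\big\langle\lvert\nabla f\rvert^2 s,s\big\rangle+C_1T\|s\|^2\Big],
\end{align*}
so that after substitution a factor $\eta$ of the bulk term $T^2\lvert\nabla f\rvert^2$ survives and gives $\|D_Ts\|^2\geqslant \eta c_0T^2\|s\|^2-CT\|s\|^2$. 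If you want to keep your framework, you must replace the trace/Gaffney absorption by an argument of this kind that uses the quantified gap $\eta$ coming from the removal of the $C_-$ neighborhoods.
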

\begin{proof}
We adapt our proof from \cite[pp.\,29-30]{Peutrec10}. Note that the Green formula holds also on
non-orientable manifolds, see \cite[Chapter 2,\,Th.\,2.1]{Taylor96}.
It is a consequence of (\ref{1.4a}) and the Green's formula that
\begin{align}\label{2.1}
\big\|D_{T}s\big\|^{2}=\big\langle d_{T}s, d_{T}s\big\rangle
+\big\langle \delta_{T}s, \delta_{T} s\big\rangle.
\end{align}
Let $\nabla f$ denote the gradient field of $f$ with respect to
the metric $g^{TM}$. Since $s_{\textup{norm}}=0$, one verifies directly from the Green's formula that
\begin{align}\begin{split}\label{2.2}
\big\langle d_{T}s, d_{T}s\big\rangle
+\big\langle \delta_{T}s, \delta_{T} s\big\rangle=&\big\langle ds, ds\big\rangle
+\big\langle \delta s, \delta s\big\rangle+T^{2}\big\langle |\nabla f|^{2}s, s\big\rangle
\\&+T\big\langle Qs, s \big\rangle+
T\int_{\partial M}\big\langle (\nu f)s_{\textup{tan}}, s_{\textup{tan}}\big\rangle d_{\partial M}(x),
\end{split}\end{align}
where $dv_{\partial M}$ denotes the density on $\partial M$ induced by $dv_{M}$,
$\nu$ outward normal field and $Q$ is an endomorphism of $\Omega(M)$ given by
\begin{align}
Q=\sum_{i=1}^{n}c(e_{i})\hat{c}\big(\nabla_{e_{i}}^{TM}(\nabla f)\big).
\end{align}
If $\supp(s)\cap \partial M=\emptyset$, then
\begin{align}\label{2.2a}
\int_{\partial M}\big\langle (\nu f)s_{\textup{tan}}, s_{\textup{tan}}\big\rangle d_{\partial M}(x)=0.
\end{align}
Then (\ref{2.0}) follows immediately from (\ref{2.1}), (\ref{2.2}) and (\ref{2.2a}).
Suppose now $\supp(s)\cap \partial M\neq \emptyset$. Clearly,
\begin{align}
-(vf)(x)<|\nabla f|(x), \ \  \textup{for any}\ x\in \ \supp(s)\cap \partial M.
\end{align}
Choose $\eta>0$ small enough so that
\begin{align}\label{2.2b}
-(vf)(x)<(1-\eta)|\nabla f|(x), \ \  \textup{for any}\ x\in \ \supp(s)\cap \partial M.
\end{align}
Locally it is possible to construct a function $\tilde{f}$
such that
\begin{align}\begin{split}\label{2.2c}
\big|\nabla\tilde{f}\big|&=\big|\nabla f\big|, \ \ \textup{in}\ \ \supp(s);
\\
\big|\nabla\tilde{f}\big|&=-\nu \tilde{f}, \ \ \textup{in}\ \ \supp(s)\cap \partial M.
\end{split}\end{align}
Let $\tilde{V}$ be an open neighborhood of $\partial M$ such that the equations (\ref{2.2c})
are solvable on $V_{r}\cap \supp(s)$ with $V_{r}=\tilde{V}\times [0, r)$. By a partition of
unity argument, we now assume that $s\in \Dom(D_{T})$
with $\supp(s)\subset V_{r}$ and the solution $\tilde{f}$ of the equations (\ref{2.2c})
 is a smooth function on $M$.
Then (\ref{2.2b}) and (\ref{2.2c}) imply
\begin{align}\label{2.2d}
T\int_{\partial M}\big\langle (\nu f)s_{\textup{tan}}, s_{\textup{tan}}\big\rangle d_{\partial M}(x)
 \geqslant
T(1-\eta)\int_{\partial M}\big\langle (\nu \tilde{f}) s_{\textup{tan}}, s_{\textup{tan}}\big\rangle d_{\partial M}(x).
\end{align}
Applying the equality (\ref{2.2}) to the smooth function $\tilde{f}$, we find
\begin{align}\begin{split}\label{2.2e}
&T\int_{\partial M}\big\langle (\nu f)s_{\textup{tan}}, s_{\textup{tan}}\big\rangle d_{\partial M}(x)
 \\ \geqslant &
-(1-\eta)\Big[\big\langle ds, ds \big\rangle+\big\langle \delta s, \delta s \big\rangle
+T^{2}\big\langle |\nabla f|^{2}s, s\big\rangle +C_{1}T\|s\|^{2}\Big],
\end{split}\end{align}
where $C_{1}>0$ is independent of $s$. Substituting (\ref{2.2e}) into (\ref{2.2}), we obtain the estimate
\eqref{2.0}. The proof of Proposition \ref{t4.2} is complete.
\end{proof}
In view of Proposition \ref{t4.2}, the eigenvectors with small eigenvalues of the
Witten Laplacian $D_T^2$ ``concentrate" for $T$ large around the critical points in $C(f)$ and $C_{-}(f|_{\partial M})$.

\subsection{Local behavior of the Witten Laplacian around critical points
in $C(f)$ and $C_{-}(f|_{\partial M})$}
\label{s3} If $p\in C^{j}(f)$, then there exists a
coordinate system $(x, U_{p})$ such that for any $x\in U_{p}$,
\begin{align}\label{3.3a}
f(x)=f(p)-\frac{x^{2}_{1}}{2}-\ldots-\frac{x^{2}_{j}}{2}+\frac{x^{2}_{j+1}}{2}+\ldots+ \frac{x^{2}_{n}}{2}.
\end{align}
Over $U_{p}$,  set $e_{k}=\frac{\partial}{\partial x_{k}}$ for $k=1, \ldots, n$.
Then the dual basis $e^{k}=dx_{k}$ for all $k$.
From (\ref{2.3}), (\ref{2.7a}), (\ref{1.4b}) and (\ref{3.3a}), we have
\begin{align}\label{3.4}
D_{T}|_{U_{p}}=D_{T, v}|_{U_{p}}.
\end{align}
Set
\begin{align}\begin{split}\label{3.5}
\mathcal{L}_{T}=&-\sum^{n}_{\alpha=1}\frac{\partial^{2}}{\partial x^{2}_{\alpha}}
+T\sum^{n}_{\alpha=1}x_{\alpha}^{2}-Tn,
\\ \mathcal{K}_{T}=&2T\Big(\sum_{\alpha=1}^{j}i_{e_{\alpha}}{e^{\alpha}}\wedge
+\sum_{\alpha=j+1}^{n}{e^{\alpha}}\wedge
i_{e_{\alpha}}\Big)
\end{split}\end{align}
Then (\ref{2.10}) and (\ref{3.4}) imply that
\begin{align}\label{3.6}
D^{2}_{T}=\mathcal{L}_{T}+\mathcal{K}_{T}
\end{align}
holds throughout $U_{p}$. Denote by $L^2\Omega(\mathbb{R}^{n})$ the space of square integrable differential forms
on $\mathbb{R}^{n}$. By Proposition \ref{t2.1}, we have
\begin{prop}\label{t4.1a}
For any $T>0$, the operator $\mathcal{L}_{T}+\mathcal{K}_{T}$ acting on
$L^2\Omega(\mathbb{R}^{n})$ is an essentially self-adjoint positive operator. Its kernel is one dimensional and is spanned by
\begin{align}
\beta_{T}=e^{-\frac{T}{2}|x|^{2}}e^{1}\wedge \ldots
\wedge e^{j}.
\end{align}
Moreover, all nonzero eigenvalues of $\mathcal{L}_{T}+\mathcal{K}_{T}$ are $\geqslant 2T$.
\end{prop}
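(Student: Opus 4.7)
The claim is essentially a repackaging of Proposition \ref{t2.1} in the local coordinates associated with the critical point $p$, so my plan is to reduce directly to that proposition.

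First I would observe that, in the coordinates $(x_1,\dots,x_n)$ given by (\ref{3.3a}) on $U_p$, the space $\mathbb{R}^n$ together with the decomposition $\mathbb{R}^j \oplus \mathbb{R}^{n-j}$ and basis $e_\alpha = \partial/\partial x_\alpha$ fits exactly the framework of \S\ref{s1} with $V^- = \mathbb{R}^j$. The function defined by (\ref{3.3a}) is the Morse quadratic function (\ref{2.3}) with $f(0)$ replaced by $f(p)$, and its gradient is the vector field $v$ of (\ref{2.7}). Consequently, by the calculation (\ref{2.10}),
\begin{equation*}
D_{T,v}^{2} = \mathcal{L}_T + \mathcal{K}_T
\end{equation*}
as operators on smooth compactly supported forms on $\mathbb{R}^n$, which matches (\ref{3.6}).

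Next I would address essential self-adjointness. The scalar part $\mathcal{L}_T$ is the standard harmonic oscillator acting component-wise on $\Lambda(T^*\mathbb{R}^n)$-valued functions; it is well known to be essentially self-adjoint on $C_c^\infty(\mathbb{R}^n)$ (or on Schwartz forms), with pure point spectrum in $[0,\infty)$ having one-dimensional kernel spanned by the Gaussian $e^{-T|x|^2/2}$ and first nonzero eigenvalue $2T$. The operator $\mathcal{K}_T$ is $2T$ times a fiberwise endomorphism of $\Lambda(T^*\mathbb{R}^n)$, hence bounded and self-adjoint on $L^2\Omega(\mathbb{R}^n)$. A bounded symmetric perturbation of an essentially self-adjoint operator is essentially self-adjoint on the same core, so $\mathcal{L}_T + \mathcal{K}_T$ is essentially self-adjoint on $L^2\Omega(\mathbb{R}^n)$.

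For the spectral statement I would carry out the separation-of-variables argument used in the proof of Proposition \ref{t2.1}. The key point is that $\mathcal{L}_T$ (a differential operator acting trivially on form indices) commutes with $\mathcal{K}_T$ (an algebraic operator acting trivially in the $x$-variable), so the spectrum of their sum is the sum of their spectra and a basis of eigenforms is obtained by tensoring Hermite-function eigenforms of $\mathcal{L}_T$ with the eigenvectors of $\mathcal{K}_T$ in $\Lambda(\mathbb{R}^n)^*$. The algebraic operator $\sum_{\alpha\leq j} i_{e_\alpha} e^\alpha\wedge + \sum_{\alpha>j} e^\alpha\wedge i_{e_\alpha}$ is nonnegative with one-dimensional kernel spanned by $e^1\wedge\cdots\wedge e^j$ (its nonzero eigenvalues are positive integers, hence $\geq 1$, so $\mathcal{K}_T$ contributes either $0$ or $\geq 2T$). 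Combining these facts, $\mathcal{L}_T + \mathcal{K}_T$ is nonnegative, its kernel is spanned by $\beta_T = e^{-T|x|^2/2}\,e^1\wedge\cdots\wedge e^j$, and the first nonzero eigenvalue is at least $2T$. There is no real obstacle here; the only care needed is to justify the tensor-product spectral decomposition, which follows from the commutation and the explicit diagonalization of each factor.
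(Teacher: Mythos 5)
Your proposal is correct and follows essentially the same route as the paper: the paper deduces Proposition \ref{t4.1a} directly from Proposition \ref{t2.1}, whose proof is exactly your decomposition into the harmonic oscillator $\mathcal{L}_T$ and the nonnegative algebraic operator $\mathcal{K}_T$ with commuting spectral analysis. Your explicit bounded-perturbation argument for essential self-adjointness is a harmless addition that the paper leaves implicit.
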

If $p\in C^{j}_{-}(f|_{\partial M})$, then there exists
a coordinate system $(x, U_{p})$ such that for any $x\in U_{p}$,
\begin{align}\label{3.8}
f(x)=f(p)-\frac{x^{2}_{1}}{2}-\ldots-\frac{x^{2}_{j}}{2}+\frac{x^{2}_{j+1}}{2}+\ldots \frac{x^{2}_{n-1}}{2}+x_{n}.
\end{align}
On $U_{p}$, set $e_{k}=\frac{\partial}{\partial x_{k}}$ for $k=1,\ldots, n$. Then
the dual basis  $e^{k}=dx_{k}$ for all $k$.
In view of (\ref{3.13}), (\ref{3.15}), (\ref{1.4b}) and (\ref{3.8}), we have
\begin{align}\label{3.9}
D_{T}|_{U_{p}}=D_{T, v_{1}}|_{U_{p}}.
\end{align}
Set
\begin{align}\begin{split}
\mathcal{L}'_{T}=&-\sum^{n}_{\alpha=1}\frac{\partial^{2}}{\partial x^{2}_{\alpha}}
+T\sum^{n-1}_{\alpha=1}x_{\alpha}^{2}-T(n-1)+T^{2},
\\ \mathcal{K}'_{T}=&2T\Big(\sum_{\alpha=1}^{j}i_{e_{\alpha}}{e^{\alpha}}\wedge
+\sum_{\alpha=j+1}^{n-1}{e^{\alpha}}\wedge
i_{e_{\alpha}}\Big).
\end{split}\end{align}
Combining (\ref{1.25}) and (\ref{3.9}), we find that on $U_{p}$,
\begin{align}\label{5.19}
D^{2}_{T}=\mathcal{L}'_{T}+\mathcal{K}'_{T}.
\end{align}
Let $\Omega(\mathbb{R}_{+}^{n})$ be the space of smooth differential sections
on $\mathbb{R}_{+}^{n}$. Consider the self-adjoint extension of \eqref{5.19}
defined as in \eqref{1.11}-\eqref{1.22}. Thus %following domain of $\mathcal{L}'_{T}+\mathcal{K}'_{T}$:
\begin{align}\label{3.12}
\Dom\big(\mathcal{L}'_{T}+\mathcal{K}'_{T}\big)\cap\Omega(\mathbb{R}_{+}^{n})=
\Big\{ w\in\Omega(\mathbb{R}_{+}^{n}),
\begin{split} w_{\textup{norm}}&=0 \ \
\\   \big(d_{T} w\big)_{\textup{norm}}&
=0  \end{split} \ \ \textup{on} \ \partial\mathbb{R}^{n}_+\Big\}.
\end{align}
By Proposition \ref{t2.3}, we have
\begin{prop}\label{t4.1b}
For any $T>0$, the self-adjoint extension of $\mathcal{L}'_{T}+\mathcal{K}'_{T}$, given as in \eqref{1.11}-\eqref{1.22}
%with the domain given as in $\textup{(\ref{3.12})}$,
is a positive operator. Its kernel is one dimensional and is spanned by
\begin{align}
\xi_{T}=e^{-\frac{T}{2}|x'|^{2}-Tx_{n}}e^{1}\wedge \ldots
\wedge e^{j}.
\end{align}
Moreover, all nonzero eigenvalues of $\mathcal{L}'_{T}+\mathcal{K}'_{T}$ are $\geqslant 2T$.
\end{prop}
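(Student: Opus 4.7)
The plan is to deduce the proposition directly from Proposition \ref{t2.3} via the coordinate identification already set up in Section \ref{s3}. The chain of identities (\ref{3.9}) and (\ref{5.19}) shows that after setting $V_{1}=\mathbb{R}^{n-1}$, with the splitting $V_{1}^{-}\oplus V_{1}^{+}$ induced by the Morse index $j$ of $p\in C^{j}_{-}(f|_{\partial M})$, and $V_{2}=V_{1}\times \mathbb{R}_{+}=\mathbb{R}^{n}_{+}$, the local model (\ref{3.8}) for $f$ is identical to the model function (\ref{3.13}) studied in Section \ref{s1}. Consequently the operator $\mathcal{L}'_{T}+\mathcal{K}'_{T}$ acting on $\Omega(\mathbb{R}^{n}_{+})$ is pointwise equal to the squared operator $D^{2}_{T,v_{1}}$.

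Next I would verify that the self-adjoint extensions coincide. The extension of $\mathcal{L}'_{T}+\mathcal{K}'_{T}$ prescribed in (\ref{3.12}) is defined through the Gaffney-type construction (\ref{1.11})--(\ref{1.22}), which is exactly the self-adjoint realization used for $D^{2}_{T,v_{1}}$ in Proposition \ref{t2.3}. Hence Proposition \ref{t2.3} applies verbatim, yielding positivity, the one-dimensional kernel, and the lower bound $2T$ on all nonzero eigenvalues. The kernel generator $\beta_{T,v_{1}}=e^{-\frac{T}{2}|Z'|^{2}-TZ_{n}}e^{1}\wedge \cdots \wedge e^{j}$ becomes $\xi_{T}$ under the coordinate identification $(Z',Z_{n})\leftrightarrow (x',x_{n})$.

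There is essentially no analytic obstacle here, since the hard work --- separation of variables into the harmonic oscillator $\mathcal{L}'_{T}$, the one-dimensional operator $R_{T}=-\partial^{2}/\partial Z_{n}^{2}+T^{2}$ with Robin boundary condition $g'(0)+Tg(0)=0$, and the positive fermionic operator on $\Lambda V_{2}^{\ast}$ --- was already carried out in the proof of Proposition \ref{t2.3}. The only remaining items are bookkeeping: that the index $j$ controlling the exterior factor $e^{1}\wedge\cdots\wedge e^{j}$ in $\mathcal{K}'_{T}$ is the Morse index of $f|_{\partial M}$ at $p$ (immediate from (\ref{3.8}) and the definition of $\mathcal{K}'_{T}$), and that the affine term $x_{n}$ in $f$ is precisely what enforces the Robin condition producing the exponential decay $e^{-Tx_{n}}$. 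Both checks are immediate from the definitions in Section \ref{s3}, so the proposition follows.
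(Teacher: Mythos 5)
Your proposal is correct and follows exactly the paper's route: the paper derives Proposition \ref{t4.1b} directly from Proposition \ref{t2.3}, since by \eqref{1.25} and \eqref{5.19} the operator $\mathcal{L}'_{T}+\mathcal{K}'_{T}$ with the boundary condition \eqref{3.12} is precisely the Gaffney extension of $D^{2}_{T,v_{1}}$ in the local model coordinates. Your extra bookkeeping (matching of the extensions and of the index $j$) only spells out what the paper leaves implicit.
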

%----------------------------------
\subsection{A decomposition of the deformed de Rham operator $D_{T}$} \label{s4}
Let $\gamma: \mathbb{R}\rightarrow [0,1]$ be a smooth cut-off function such that
$\gamma(x)=1$ if $|x|\leqslant a$ and that $\gamma(x)=0$ if $|x|\geqslant 2a$.
For any $p\in C^{j}(f)$ and $q\in C_{-}^{j}(f|_{\partial M})$, set
\begin{align}\begin{split}
\alpha_{p,T}=&\int_{U_{p}}\gamma(|x|)^{2} e^{-T|x|^{2}}dx_{1}\wedge \ldots \wedge dx_{n},
\\ \alpha_{q,T}=&\int_{U_{q}}\gamma(|x|)^{2} e^{-T|x'|^{2}-2Tx_{n}}dx_{1}\wedge \ldots \wedge dx_{n}.
\end{split}\end{align}
Clearly, there exists $c>0$ such that as $T\rightarrow +\infty,$
\begin{align}\begin{split}
\alpha_{p, T}=&\big(\frac{\pi}{T}\big)^{\frac{n}{2}}+O(e^{-cT}),
\\ \alpha_{q, T}=&\frac{1}{2T}\big(\frac{\pi}{T}\big)^{\frac{n-1}{2}}+O(e^{-cT}).
\end{split}\end{align}
Set
\begin{align}\begin{split}\label{5.2}
\rho_{p,T}=& \frac{\gamma(|x|)}{\sqrt{\alpha_{p,T}}}
e^{-\frac{T}{2}|x|^{2}}dx_{1}\wedge \ldots \wedge dx_{j},
\\ \rho_{q,T}=& \frac{\gamma(|x|)}{\sqrt{\alpha_{q,T}}}
e^{-\frac{T}{2}|x'|^{2}-Tx_{n}} dx_{1}\wedge \ldots \wedge dx_{j}.
\end{split}\end{align}
Let $E^{j}_{T}$ be the direct sum of the vector spaces generated by all
$\rho_{p, T}$'s and $\rho_{q, T}$'s with $p\in C^{j}(f)$ and $q\in C^{j}_{-}(f|_{\partial M})$.
Set $E_{T}=\oplus^{n}_{j=0} E^{j}_{T}$.
Clearly,
\begin{align}
\text{dim}\ E_{T}=\sum_{j=0}^{n}\big(c_{j}+p_{j}\big).
\end{align}
Take $E^{\bot}_{T}$ as the orthogonal complement of $E_{T}$ in $\Dom(D_{T})$,
then $\Dom(D_{T})$ has an orthogonal splitting:
\begin{align}\label{5.5}
\Dom(D_{T})=E_{T}\oplus E^{\bot}_{T}.
\end{align}
Let $p_{1}, p^{\bot}_{1}$ denote the orthogonal projections from $\Dom(D_{T})$
onto $E_{T}$ and $E^{\bot}_{T}$, respectively. Also we have another orthogonal splitting
about $E_{T}$ in $L^{2}\Omega(M)$:
\begin{align}\label{5.5a}
L^{2}\Omega(M)=E_{T}\oplus F_{T},
\end{align}
where $F_{T}$ is the orthogonal complement of $E_{T}$ in $L^{2}\Omega(M)$.
Then $E^{\bot}_{T}\subset F_{T}$.
Denote by $p_{2}, p^{\bot}_{2}$ the orthogonal projections from $L^{2}\Omega(M)$
onto $E_{T}$ and $F_{T}$, respectively.
Following Bismut-Lebeau \cite[\S 9]{Bismut74}, we decompose the deformed de Rham
operator $D_{T}$ according to the splittings (\ref{5.5}) and (\ref{5.5a}):
\begin{align}\begin{split}\label{5.6}
D_{T,1}=p_{2}D_{T}p_{1}, & \ \  D_{T,2}=p_{2}D_{T}p^{\bot}_{1},
\\ D_{T,3}=p^{\bot}_{2}D_{T}p_{1} &  \ \ D_{T,4}=p^{\bot}_{2}D_{T}p^{\bot}_{1}.
\end{split}\end{align} Then
\begin{align}\label{5.6a}
D_{T}=D_{T, 1}+D_{T, 2}+D_{T, 3}+D_{T, 4}.
\end{align}
Denote by ${\bf H}^{1}(M)$ the first Sobolev space with respect to a (fixed) Sobolev norm on $\Omega(M)$.
The analogues of the estimates \cite[Prop.\,5.6]{Zhang01} still hold for the operators $D_{T,j}$\,:
\begin{prop}\label{t6.1}
(1) For any $T>0$,
\begin{align}\label{5.11a}
D_{T,1}=0;
\end{align}
(2) There exist positive constants $b_{1}, b_{2}$ and $T_{4}$ such that for any $s\in E^{\bot}_{T}\cap {\bf H}^{1}(M),
s'\in E_{T}$ and any $T\geqslant T_{4}$, one has
\begin{align}\begin{split}\label{5.11}
\big\|D_{T,2}s\big\| &\leqslant b_{1}e^{-b_{2}T}\big\|s\big\|,
\\ \big\|D_{T,3}s'\big\| &\leqslant b_{1}e^{-b_{2}T}\big\|s'\big\|.
\end{split}\end{align}
(3) There exists constant $b_{3}>0$ and $T_{5}>0$, such that for any $s\in E^{\bot}_{T}\cap {\bf H}^{1}(M)$
and any $T\geqslant T_{5}$, one has
\begin{align}
\big\|D_{T, 4}s\big\| &\geqslant b_{3}T\big\|s\big\|.
\end{align}
\end{prop}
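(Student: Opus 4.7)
The plan is to exploit three model computations. First, near each critical point $p\in C(f)\cup C_{-}(f|_{\partial M})$ the global operator $D_{T}$ coincides with the model operator $D_{T,v}$ of Section \ref{s1} (or $D_{T,v_{1}}$ in the boundary case). Second, the Gaussian $\beta_{T}$ (resp.\ $\xi_{T}$) is annihilated by the corresponding model operator, so that
\[
D_{T}\rho_{p,T}=\frac{1}{\sqrt{\alpha_{p,T}}}\,c(\nabla\gamma)\,\beta_{T}\qquad(\text{resp.\ with }\xi_{T}),
\]
which is supported in the annulus $\{a\leqslant|x|\leqslant 2a\}\subset U_{p}$. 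Third, on that annulus $|\beta_{T}|,|\xi_{T}|=O(e^{-cT})$, so $\|D_{T}\rho_{p,T}\|\leqslant b_{1}e^{-b_{2}T}$. Part (1) now follows at once: choose $a$ small enough that the sets $\{|x|\leqslant 2a\}\subset U_{p}$ are pairwise disjoint, so that for $p\ne p'$ the inner product $\langle D_{T}\rho_{p,T},\rho_{p',T}\rangle$ vanishes by disjoint support, while for $p=p'$ it vanishes because $\rho_{p,T}$ has pure degree $\mathrm{ind}(p)$ and $D_{T}\rho_{p,T}$ lives in degrees $\mathrm{ind}(p)\pm 1$. Hence $p_{2}D_{T}\rho_{p,T}=0$ for every generator, i.e.\ $D_{T,1}=0$.

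Part (2) is a direct consequence of the estimate $\|D_{T}\rho_{p,T}\|\leqslant b_{1}e^{-b_{2}T}$. One first checks (using the computations in the proofs of Propositions \ref{t2.1} and \ref{t2.3}, in particular that $D_{T,v_{1}}\xi_{T}=0$ implies both boundary conditions near $\partial M$) that every $\rho_{p,T}$ lies in $\Dom(D_{T}^{2})\subset\Dom(D_{T})$. The family $\{\rho_{p,T}\}$ is orthonormal by construction, so for $s'=\sum_{p}a_{p}\rho_{p,T}\in E_{T}$ one has $\|D_{T,3}s'\|\leqslant\|D_{T}s'\|$, and Cauchy--Schwarz combined with the exponential bound on each $\|D_{T}\rho_{p,T}\|$ yields the claim for $D_{T,3}$. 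For $D_{T,2}$, the self-adjointness of $D_{T}$ gives $\langle D_{T,2}s,\rho_{p,T}\rangle=\langle s,D_{T}\rho_{p,T}\rangle$, bounded by $\|s\|\cdot\|D_{T}\rho_{p,T}\|\leqslant b_{1}e^{-b_{2}T}\|s\|$; summing against the orthonormal basis $\{\rho_{p,T}\}$ of $E_{T}$ gives the bound on $D_{T,2}$.

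Part (3) is the main obstacle and will rest on an IMS-type localization. Choose a smooth partition of unity $\phi_{0}^{2}+\sum_{p}\phi_{p}^{2}=1$ on $M$ with $\phi_{p}\equiv 1$ on $B(p,2a)$, $\supp(\phi_{p})\subset B(p,3a)$, and $\supp(\phi_{0})\subset M\setminus\bigcup_{p}B(p,2a)$; note that $\phi_{\bullet}s\in\Dom(D_{T})$ because $(\phi_{\bullet}s)_{\textup{norm}}=\phi_{\bullet}\cdot s_{\textup{norm}}=0$. Since $D_{T}$ is first order, a direct expansion using $\sum\phi_{i}^{2}=1$ gives the IMS identity
\[
\|D_{T}s\|^{2}=\sum_{p}\|D_{T}(\phi_{p}s)\|^{2}+\|D_{T}(\phi_{0}s)\|^{2}-\sum_{p}\|c(\nabla\phi_{p})s\|^{2}-\|c(\nabla\phi_{0})s\|^{2},
\]
whose last two terms are $O(\|s\|^{2})$. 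Proposition \ref{t4.2} bounds the $\phi_{0}$ term below by $a_{4}^{2}T^{2}\|\phi_{0}s\|^{2}$. On $B(p,3a)$ one extends $\phi_{p}s$ by zero to the model space $\mathbb{R}^{n}$ or $\mathbb{R}^{n}_{+}$ and applies Proposition \ref{t4.1a} or \ref{t4.1b}: the model operator has one-dimensional kernel spanned by the normalized vector $\widetilde\beta_{p,T}$ with all other eigenvalues $\geqslant 2T$, whence
\[
\|D_{T}(\phi_{p}s)\|^{2}\geqslant 2T\bigl(\|\phi_{p}s\|^{2}-|\langle\phi_{p}s,\widetilde\beta_{p,T}\rangle|^{2}\bigr).
\]
The delicate quantitative step, and the real obstacle, is to show that the local projection coefficient is negligible: $\rho_{p,T}$ differs from $\widetilde\beta_{p,T}$ only through the cut-off $\gamma$ and the ratio $\|\beta_{T}\|_{L^{2}(\mathbb{R}^{n})}/\sqrt{\alpha_{p,T}}=1+O(e^{-cT})$, and outside $B(p,a)$ both tails are exponentially small, so the hypothesis $s\in E_{T}^{\perp}$ (which gives $\langle s,\rho_{p,T}\rangle=0$) yields $|\langle\phi_{p}s,\widetilde\beta_{p,T}\rangle|=O(e^{-cT})\|s\|$. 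Summing over $p$, using $\sum_{p}\phi_{p}^{2}+\phi_{0}^{2}=1$, and finally combining with the orthogonal decomposition $\|D_{T}s\|^{2}=\|D_{T,2}s\|^{2}+\|D_{T,4}s\|^{2}$ together with the exponential bound on $\|D_{T,2}s\|$ from (2), one arrives at the claimed lower bound on $\|D_{T,4}s\|$ for $T$ large.
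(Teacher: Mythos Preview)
Your proof is correct and follows essentially the same route as the paper, which defers to \cite[Prop.\,5.6]{Zhang01} for parts (1), (3) and the $D_{T,2}$ estimate, and singles out only the $D_{T,3}$ estimate for explicit treatment via the formula $D_{T}\rho_{p,T}=c(\nabla\gamma)\alpha_{p,T}^{-1/2}\beta_{T}$ (resp.\ $\xi_{T}$). Your write-up in fact makes explicit the IMS localization and the comparison $|\langle\phi_{p}s,\widetilde\beta_{p,T}\rangle|=O(e^{-cT})\|s\|$ that the paper leaves inside the reference; the only minor remark is that your argument for $D_{T,2}$ via $\langle D_{T}s,\rho_{p,T}\rangle=\langle s,D_{T}\rho_{p,T}\rangle$ relies on the symmetry of $D_{T}$ on $\Dom(D_{T})$ (both $s$ and $\rho_{p,T}$ satisfy the absolute boundary condition), which is exactly why the paper warns that one cannot simply quote $D_{T,3}=D_{T,2}^{*}$ at the level of the projected pieces, but your formulation avoids that pitfall.
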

\begin{proof} The proof is similar to \cite[Prop.\,5.6]{Zhang01}
except for the estimate of $D_{T, 3}$.
Compared to \cite[(9.17)]{Bismut74} and \cite[(5.19)]{Zhang01}, the operator
$D_{T, 3}$ is no longer the formal adjoint of $D_{T, 2}$ due to the fact that the image of $D_{T}$ acting on
$\Dom(D_{T})$ does not necessarily lie in $\Dom(D_{T})$. We prove
the estimate of $D_{T, 3}$ directly.
From (\ref{5.2}), Proposition \ref{t4.1a} and Proposition \ref{t4.1b}, we have
\begin{align}\label{5.13}
D_{T}(\rho_{p,T})= \frac{c(\nabla \gamma)}{\sqrt{\alpha_{p,T}}}
e^{-\frac{T}{2}|x|^{2}}dx_{1}\wedge \ldots \wedge dx_{j}
\end{align}
and
\begin{align}\label{5.14}
D_{T}(\rho_{q,T})= \frac{c(\nabla \gamma)}{\sqrt{\beta_{q,T}}}
e^{-\frac{T}{2}|x'|^{2}-Tx_{n}} dx_{1}\wedge \ldots \wedge dx_{j},
\end{align}
where $c(\nabla \gamma)$ denotes the endomorphism on $\Omega(M)$ given as
\begin{align}
c(\nabla \gamma)=\sum_{i=1}^{n}(e_{i}\gamma) c(e_{i}).
\end{align}
Then (\ref{5.13}) and (\ref{5.14}) imply the second inequality in (\ref{5.11}).
The rest of the proof is similar to \cite[Prop.\,5.6]{Zhang01}.
\end{proof}
\begin{proof}[Proof of Theorem \ref{t1.3} and Theorem \ref{t1.4}]
By the min-max principle \cite[(C.3.3)]{Ma07},
\begin{align}\label{5.15}
\lambda_{k}(T)=\inf_{\stackrel{F\subset\Dom(D^{2}_{T}),}{\dim F=k}} \sup_{\stackrel{s\in F,}{\|s\|=1}}
\big\langle D^{2}_{T}s, s\big\rangle.
\end{align}
Take $F=E^{j}_{T}$, then $\textup{dim}F=c_{j}+p_{j}$. It is a consequence of (\ref{5.11a}) and (\ref{5.11}) that
for every $s\in F$,
\begin{align}\label{5.16}
\big\|D_{T}s\big\|^{2}=\big\|D_{T, 3}s\big\|^{2}\leqslant b^{2}_{1}e^{-2b_{2}T}\big\|s\big\|^{2}.
\end{align}
Then (\ref{0.2}) follows immediately from (\ref{5.15}) and (\ref{5.16}).
Suppose now that $F$ is a $(c_{j}+p_{j}+1)$-dimensional subspace of $\Dom(D^{2}_{T})\cap \Omega^{j}(M)$.
Clearly $F\cap E^{\bot}_{T}\neq \{0\}$. Let $s\in F\cap E^{\bot}_{T}$ be a nonzero element. Then  (\ref{5.11}) yields
\begin{align}\label{5.17}
\big\|D_{T}s\big\|^{2}=\big\|D_{T, 2}s\big\|^{2}+\big\|D_{T, 4}s\big\|^{2}
\geqslant \big\|D_{T, 4}s\big\|^{2} \geqslant b^{2}_{3}T^{2}\big\|s\big\|^{2}.
\end{align}
Relations (\ref{5.15}) and (\ref{5.17}) imply immediately (\ref{0.1}).
\end{proof}
%------------------------------
\subsection{Proof of Proposition \ref{t1.2} and Theorem \ref{t1.0}}\label{s7}
Denote by $F^{C_{0}}_{T}$ the finite dimensional vector space consisting of eigenspaces
of $D^{2}_{T}|_{\Dom(D^{2}_{T})}$ associated to the eigenvalues lying in $[0, C_{0})$, i.e.,
$F^{C_{0}}_{T}=\bigoplus_{j=0}^{n}F^{C_{0}}_{T, j}$.
Denote by $P^{C_{0}}_{T}$ the orthogonal projection operator from $E$
to $F^{C_{0}}_{T}$. Since $D^{2}_{T}$ preserves the degree of $\Omega^{\bullet}(M)$, the projection $P^{C_{0}}_{T}$
maps $\Omega^{j}(M)$ onto $F^{C_{0}}_{T, j}$.
Let $J_{T}$ be the linear map from $C^{j}$ to $E^{j}_{T}$ by sending
$[p]^{\ast}$ to $\rho_{p, T}$ for $p\in C^{j}(f)\cup C^{j}_{-}(f|_{\partial M})$.
Set $e_{T}: C^{j}\rightarrow F^{C_{0}}_{T, j}$ by
$e_{T}=P^{C_{0}}_{T}J_{T}$. In view of Proposition \ref{t6.1},
the following result (\cite[Th.\,6.7]{Zhang01}) still holds.
%----------------------------
\begin{lemma}\label{t7.1}
There exists $c>0$ such that as $T\rightarrow \infty$, for any $s\in C^{j}$,
\begin{align}\label{7.2}
\big(e_{T}-J_{T})s=O(e^{-cT})\big\|s\big\|\ \ \textup{uniformly on}\ M.
\end{align}
In particular, $e_{T}$ is an isomorphism when $T$ is large enough.
\end{lemma}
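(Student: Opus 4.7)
The plan is to combine the spectral gap for $D_T^2$ provided by Theorems \ref{t1.3} and \ref{t1.4} with the near-kernel property of the $\rho_{p,T}$ to show that $(P^{C_0}_T - I)\rho_{p,T}$ is exponentially small. Since $J_T$ sends the orthonormal basis $\{[p]^*\}$ of $C^j$ to the $L^2$-orthonormal family $\{\rho_{p,T}\}$ (with supports in disjoint balls), linearity will then upgrade this single-generator statement to the claimed estimate for $e_T - J_T = (P^{C_0}_T - I) J_T$.

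The starting point is the computation \eqref{5.13}--\eqref{5.14}, which shows $\|D_T \rho_{p,T}\|_{L^2} \leq C e^{-cT}$ for some $c > 0$: indeed $D_T \rho_{p,T}$ is supported in $\{a \leq |x| \leq 2a\}$, where the Gaussian factor $e^{-T|x|^2/2}$ is already exponentially small and dominates the $\alpha_{p,T}^{-1/2} \sim T^{n/4}$ prefactor. Because $D_T$ commutes with the spectral projector $P^{C_0}_T$ of $D_T^2$, setting $s_0 := (I - P^{C_0}_T)\rho_{p,T}$ gives $D_T s_0 = (I - P^{C_0}_T) D_T \rho_{p,T}$, so $\|D_T s_0\|_{L^2} \leq \|D_T \rho_{p,T}\|_{L^2} \leq C e^{-cT}$. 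For $T$ large enough, Theorem \ref{t1.3} shows that all eigenvalues of $D_T^2$ outside $[0, C_0)$ are $\geq a_1 T^2$, so on $\ran(I - P^{C_0}_T)$ we have $D_T^2 \geq a_1 T^2$, and therefore $\|s_0\|_{L^2} \leq \|D_T s_0\|_{L^2}/(\sqrt{a_1}\, T) = O(e^{-cT}/T)$.

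To upgrade this $L^2$ bound to a uniform bound on $M$, the plan is to iterate: applying $D_T$ to $\rho_{p,T}$ repeatedly preserves the exponentially small factor $e^{-Ta^2/2}$ from $\supp(\nabla \gamma)$ while only picking up polynomial-in-$T$ coefficients, so $\|D_T^{2k}\rho_{p,T}\|_{L^2} \leq C_k T^{N(k)} e^{-cT}$ for each $k$. Combining with the spectral inequality on $\ran(I - P^{C_0}_T)$ yields the same bound for $\|D_T^{2k} s_0\|_{L^2}$. Elliptic regularity for $D_T^2$ with the absolute boundary conditions then controls $\|s_0\|_{H^{2k}(M)}$ with only polynomial loss in $T$, and Sobolev embedding for $2k > n/2$ upgrades this to the required pointwise bound. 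The hard part will be this elliptic-regularity step, because one must verify that the Sobolev estimate has at most polynomial $T$-dependence near $\partial M$, where the boundary conditions in \eqref{1.8} interact with the linearized Morse model \eqref{5.19} at the boundary critical points. Granted the estimate \eqref{7.2}, the ``in particular'' clause is routine: since $J_T$ is an $L^2$-isometry and $(P^{C_0}_T - I) J_T \sigma$ is exponentially small in $L^2$, one obtains $\|e_T \sigma\|_{L^2} \geq (1 - C'e^{-cT})\|\sigma\|$, so $e_T$ is injective; as $\dim C^j = c_j + p_j = \dim F^{C_0}_{T,j}$ by Proposition \ref{t1.2}, $e_T$ is an isomorphism for $T$ large.
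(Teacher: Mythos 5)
Your $L^2$ estimate for $s_0=(I-P^{C_0}_T)\rho_{p,T}$ is fine (it repackages the spectral-gap information that the paper gets from Proposition \ref{t6.1} via min--max), and your closing argument is not circular, since Proposition \ref{t1.2} only needs the estimate part of the lemma (it in fact already follows from Theorems \ref{t1.3} and \ref{t1.4}). The genuine gap is in the sup-norm upgrade. The identity $D_T^{2k}s_0=(I-P^{C_0}_T)D_T^{2k}\rho_{p,T}$, which is what "combining with the spectral inequality" requires, presupposes $\rho_{p,T}\in\Dom(D_T^{2k})$, i.e.\ that $D_T^{2j}\rho_{p,T}$ satisfies the boundary conditions \eqref{1.8} for every $j<k$. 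This fails already for $k=2$ at the boundary wells: for $q\in C^{j}_-(f|_{\partial M})$, \eqref{5.19} gives $D_T^2\rho_{q,T}=\alpha_{q,T}^{-1/2}\bigl(-(\Delta_0\gamma)+2T\tfrac{\gamma'(|x|)}{|x|}(|x'|^2+x_n)\bigr)e^{-T|x'|^2/2-Tx_n}\,dx_1\wedge\cdots\wedge dx_j$ (with $\Delta_0$ the ordinary Laplacian), and checking the Robin-type condition of \eqref{1.22a} at $x_n=0$ leaves the nonzero term $2T\gamma'(|x'|)/|x'|$ on the annulus $a<|x'|<2a$, because the cutoff $\gamma(|x|)$ depends on $x_n$. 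The defect is $O(Te^{-Ta^2/2})$ but nonzero, so $D_T^2\rho_{q,T}\notin\Dom(D_T^2)$, hence $\rho_{q,T}\notin\Dom(D_T^4)$ and the commutation with $P^{C_0}_T$ is unavailable. The same defect blocks the naive iteration of boundary elliptic regularity: $D_T^2s_0$ no longer satisfies the homogeneous absolute conditions, so $\|s_0\|_{H^{2k}}\lesssim\|D_T^2s_0\|_{H^{2k-2}}+\cdots$ cannot simply be fed back into itself. So the obstacle is not only the polynomial $T$-dependence you flag, but a structural domain/boundary-compatibility problem with powers of $D_T^2$.

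This is repairable (e.g.\ bound $\|D_T^{2k}s_0\|_{L^2}$ as differential expressions, using $D_T^{2k}P^{C_0}_T\rho_{p,T}=\sum_i\lambda_i^k\langle\rho_{p,T},\phi_i\rangle\phi_i$ together with Theorem \ref{t1.4}, and then invoke elliptic estimates with the exponentially small inhomogeneous boundary traces carried along), but as written the step fails, and it is exactly where the paper goes another way: following \cite[Th.\,6.7]{Zhang01}, one writes $e_T-J_T$ as a contour integral of $\lambda^{-1}(\lambda-D_T)^{-1}D_TJ_T$ over a fixed circle, bounds the resolvent uniformly on the contour using Proposition \ref{t6.1}, and obtains the uniform estimate from Sobolev mapping properties of the resolvent of the boundary value problem; that route applies $D_T$ to $\rho_{p,T}$ only once and never meets the higher-domain issue. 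You should either switch to that argument or supply the inhomogeneous-boundary-data elliptic estimates (with constants polynomial in $T$, uniform near the $T$-dependent condition $(d_Tw)_{\textup{norm}}=0$) that your iteration implicitly needs.
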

%----------------------------
\begin{proof} The proof is similar to that of \cite[Th.\,6.7]{Zhang01}.
\end{proof}
%----------------------------
\begin{proof}[Proof of Proposition \ref{t1.2}]
From Lemma \ref{t7.1}, there exists $T_{6}>0$ such that when $T\geqslant T_{6}$,
\begin{align}
\textup{dim}\ F^{C_{0}}_{T}\geqslant \textup{dim}\ E_{T}.
\end{align}
We next carry on nearly word by word as in \cite[pp. 86-88]{Zhang01} to get
\begin{align}
\textup{dim}\ F^{C_{0}}_{T, j}=c_{j}+p_{j}.
\end{align}
Then the proof of Proposition \ref{t1.2} is complete.
\end{proof}
%----------------------------
\begin{proof}[Proof of Theorem \ref{t1.0}]
Substituting (\ref{7.7b}) into (\ref{7.7a}), we get for $\alpha\in F^{C_{0}}_{T, j}$,
\begin{align}
P_{\infty, T}(\alpha)=\sum_{p\in C^{j}(f)\cup\, C_{-}^{j}(f|_{\partial M})}[p]^{\ast}\int\limits_{\overline{W^{u}(p)}}
e^{Tf}\alpha.
\end{align}
Note that $P_{\infty, T}$ is a chain homomorphism, i.e., $P_{\infty, T}d_{T}=\partial P_{\infty, T}$. Indeed, the proof of
\cite[Prop.\,6]{Laudenbach92} (in the boundaryless case) goes through also for
$P_{\infty, T}$ defined in (\ref{7.7a}); we just apply the Stokes formula as in \cite[Prop.\,6]{Laudenbach92}
also for the closure of unstable manifolds $\overline{W^{u}(p)}$ for $p\in C_{-}(f|_{\partial M})$.
Define $\mathcal{F}\in \textup{End}(C^{j})$ by sending $[p]^{\ast}$ to $f(p)\cdot [p]^{\ast}$
for $p\in C^{j}(f)$ and sending $[q]^{\ast}$ to $\big(f(q)+\frac{1}{2T}\ln 2\pi \big)[q]^{\ast}$
for $q\in C^{j}_{-}(f|_{\partial M})$.
Set $\mathcal{N}\in \textup{End}(C^{j})$ by taking $[p]^{\ast}$ to $j\cdot [p]^{\ast}$
for $p\in C^{j}(f)$ and taking $[q]^{\ast}$ to $(j-\frac{1}{2})\cdot [q]^{\ast}$
for $q\in C^{j}_{-}(f|_{\partial M})$.
From (\ref{2.32a}), (\ref{2.32b}) and Proposition \ref{t6.1}, we get the following analogue of
\cite[Th.\,6.9]{Zhang01}: there exists $c>0$ such that as $T\rightarrow \infty$,
\begin{align}\label{7.7}
P_{\infty, T}e_{T}=e^{T\mathcal{F}}(\frac{\pi}{T})^{\frac{\mathcal{N}}{2}-\frac{n}{4}}\big(1+O(e^{-cT})\big)
\end{align}
\noindent In particular, $P_{\infty, T}$ is an isomorphism when $T$ is large enough.
Since $P_{\infty, T}$ is a chain homomorphism,
it induces an isomorphism between the cohomology groups of the two complexes.
We finish the proof of Theorem \ref{t1.0}.
\end{proof}
%----------------------------
\begin{rem}
Given the Morse function $f$, the Morse-Smale complex $(C^{\bullet}, \partial)$ defined as in
(\ref{2.32c}) and (\ref{2.32d}) depends on the vector field $X$. However, by
\cite[\S 2.3, Prop.]{Laudenbach10}, the homology of $(C^{\bullet}, \partial)$ is
independent of the choice of the pseudo-gradient vector field. Therefore, the Witten
complex $(F^{C_{0}}_{T, \bullet}, d_{T})$ is quasi-isomorphic to all Morse-Smale complex
constructed by Laudenbach in \cite{Laudenbach10}.
\end{rem}
%----------------------------
\begin{rem}\label{t7.2}
For the relative boundary case, our strategy proceeds as follows.
We first find a vector field $Y$ on $M$ satisfying
the conditions (1)--(5) in Section \ref{s5} except that the sets $C^{j}_{-}(f|_{\partial M})$,
$C_{-}(f|_{\partial M})$ there should be replaced by the sets $C^{j-1}_{+}(f|_{\partial M})$ and
$C_{+}(f|_{\partial M})$, respectively, and the condition (4) should read as
\\
$(4)'$  if $p\in C_{+}^{j-1}(f|_{\partial M})$, there are coordinates
$x=(x', x_{n})\in \mathbb{R}^{n-1}\times \mathbb{R}_{+}$
on some neighborhood $U_{p}$ of $p$ such that on $U_{p}$,
\begin{align}
f(x)=f(p)-\frac{x^{2}_{1}}{2}-\ldots-\frac{x^{2}_{j-1}}{2}+\frac{x^{2}_{j}}{2}+\ldots+\frac{x^{2}_{n-1}}{2}-x_{n}
\end{align}
and
\begin{align}
Y=\sum_{i=1}^{j-1}x_{i}\frac{\partial}{\partial x_{i}}-\sum_{i=j}^{n-1}x_{i}\frac{\partial}{\partial x_{i}}
+x_{n}\frac{\partial}{\partial x_{n}}.
\end{align}
Then the corresponding Thom-Smale complex is derived by replacing
the sets $C^{j}_{-}(f|_{\partial M})$ in (\ref{2.32c}),
$C^{j+1}_{-}(f|_{\partial M})$ in (\ref{2.32d})
by the sets $C^{j-1}_{+}(f|_{\partial M})$ and $C^{j}_{+}(f|_{\partial M})$, respectively.
On the other hand, we consider different boundary conditions for $D_T^2$, that is,
a different self-adjoint extension. We start with the weak maximal extension of $\delta_T$,
\begin{equation}
\Dom(\delta_T)=\big\{w\in L^2\Omega(M),\delta_Tw\in L^2\Omega(M)\big\}
\end{equation}
where $\delta_Tw$ is calculated in the sense of distributions.
We denote by $\delta_T^*$ the Hilbert space adjoint of $\delta_T$. Hence
\begin{equation}\label{5.1}
\begin{split}
\Dom&(\delta_T^*)\cap\Omega(M)=\big\{w\in \Omega(M):w_{\textup{tan}}=0\big\}\,,\\
&\delta_T^*w=d_Tw\:\:\text{for $w\in\Dom(\delta_T^*)\cap\Omega(M)$}\,.
\end{split}
\end{equation}
The domain of the extension $D_T=\delta_T^*+\delta_T$ of deformed de
Rham operator $d_T+\delta_T$ is $\Dom(\delta_T^*)\cap\Dom(\delta_T)$.
By \eqref{5.1},
\begin{align}\label{5.7}
\textup{Dom}(D_{T})\cap\Omega(M)=\big\{w \in \Omega(M), w_{\textup{tan}}=0 \ \ \textup{on} \ \partial M\big\}.
\end{align}
%
%
%
%instead of the domains given in (\ref{1.6}) consider now the
%following domains of $d_{T}$ and $\delta_{T}$:
%\begin{align}
%\Dom(\delta_{T})=\Omega(M),
%\ \Dom(d_{T})=\big\{w\in \Omega(M), w_{\textup{tan}
%}=0 \ \ \textup{on} \ \partial M \big\}.
%\end{align}
%Then
%\begin{align}
%\Dom(D_{T})=\big\{w \in \Omega(M), w_{\textup{tan}}=0 \ \ \textup{on} \ \partial M\big\}.
%\end{align}
We define the Gaffney estension of $D^{2}_{T}$ as in \eqref{1.22} by
$D^{2}_{T}=\delta_T^*\delta_T+\delta_T\delta_T^*$. Then
\begin{align}
\Dom(D^{2}_{T})\cap\Omega(M)=\Big\{ w\in\Omega(M),
\begin{split} w_{\textup{tan}}&=0 \ \
\\   \big(\delta_{T} w\big)_{\textup{tan}}&
=0  \end{split} \ \ \textup{on} \ \partial M  \Big\}.
\end{align}
The analogues of Theorems \ref{t1.3} and \ref{t1.4} are obtained simply by replacing the number $p_{j}$
in the expressions (\ref{0.1}) and (\ref{0.2}) by the number $q_{j-1}$.
Then we obtain the corresponding Witten instanton complex. Moreover, the chain morphism
between the Witten instanton complex and the Thom-Smale complex is constructed
as in (\ref{7.7b}) and (\ref{7.7a}) except that
the set $C_{-}(f|_{\partial M})$ in (\ref{7.7b}) should be replaced by the set $C_{+}(f|_{\partial M})$.
This morphism is an isomorphism for $T$ large enough. As a by-product, we
obtain the inequalities (\ref{1.1a}).
\end{rem}
%----------------------------
\noindent
\textbf{\emph{Acknowledgements.}}
The author is indebted to Prof.\ Xiaonan Ma and Prof.\ George Marinescu for
their kind advices.
%----------------------------

\end{document}